\newtheorem{remark}{Remark}
\newtheorem{definition}{Definition}[section]
\newtheorem{notation}{Notation}[section]
\newtheorem{properties}{Properties}[section]
\newtheorem{theorem}{Theorem}[section]
\newtheorem{lemma}{Lemma}[section]
\newtheorem{question}{Question}[section]
\newenvironment{proof}{{\noindent\it Proof}\quad}{\hfill $\square$\par} 
\numberwithin{figure}{section}
\numberwithin{equation}{section}
\begin{document}
	
	\title{Deep Neural Networks and Finite Elements of \\Any Order on Arbitrary Dimensions}
	\author{Juncai He\footnotemark[1] \and  Jinchao Xu\footnotemark[1] \footnotemark[2]}
	\date{}                                           
	
	\maketitle
	\renewcommand{\thefootnote}{\fnsymbol{footnote}} 
	\footnotetext[1]{Computer, Electrical and Mathematical Science and Engineering Division, King Abdullah University of Science and Technology, Thuwal 23955, Saudi Arabia.} 
	\footnotetext[2]{Department of Mathematics, The Pennsylvania State University, University Park, PA 16802, USA.} 
	
	\begin{abstract}
		In this study, we establish that deep neural networks employing ReLU and ReLU$^2$ activation functions can effectively represent Lagrange finite element functions of any order on various simplicial meshes in arbitrary dimensions.
		We introduce two novel formulations for globally expressing the basis functions of Lagrange elements, tailored for both specific and arbitrary meshes.
		These formulations are based on a geometric decomposition of the elements, incorporating several insightful and essential properties of high-dimensional simplicial meshes, barycentric coordinate functions, and global basis functions of linear elements.
		This representation theory facilitates a natural approximation result for such deep neural networks. Our findings present the first demonstration of how deep neural networks can systematically generate general continuous piecewise polynomial functions on both specific or arbitrary simplicial meshes.
	\end{abstract}

	\section{Introduction}
	Deep neural networks (DNNs) with specific activation functions have attracted significant attention in scientific computing, particularly in the context of solving partial differential equations (PDEs)~\cite{lagaris1998artificial,yu2018deep,raissi2019physics,xu2020finite,siegel2023greedy}.
	To capitalize on the comprehensive mathematical findings and insights from traditional numerical analysis methods, exploring the expressivity properties of deep neural networks for classical methods is valuable. Specifically, we aim to study the relationships between DNNs and finite element functions~\cite{ciarlet2002finite}.
	For instance, it is recognized that DNNs with the rectified linear unit (ReLU) activation function~\cite{nair2010rectified}, defined as ${\rm ReLU}(t):=\max\{0,t\}$, represent a class of continuous piecewise linear (CPwL) functions. It has been shown that any CPwL function on $\mathbb{R}^d$ can be represented by a ReLU DNN with at most $\lceil \log_2(d+1) \rceil$ hidden layers~\cite{arora2018understanding}. However, as indicated in \cite{he2020relu}, such representations may require an extraordinarily large parameter count. By leveraging properties of linear finite element functions,  we developed a more succinct and efficient representation for linear finite element functions (CPwL functions with simplex meshes) in \cite{he2020relu}.
	Beyond representing general CPwL functions, several studies~\cite{montufar2014number,telgarsky2015representation,mhaskar2016deep,lu2017expressive,mehrabi2018bounds} have illustrated that ReLU networks with adequate depth and $\mathcal{O}(N)$ parameters can generate specific CPwL functions with segment counts exceeding any polynomial of $N$.
	These discoveries are pivotal in a body of research focusing on the approximation properties of very deep ReLU DNNs~\cite{yarotsky2017error,opschoor2020deep,montanelli2021deep,lu2021deep,marcati2023exponential}.
	A hierarchical basis viewpoint, originating from finite elements, was suggested in \cite{he2022relu} to enhance the understanding of these approximations.
	Nonetheless, as shown in \cite{daubechies2022nonlinear,he2023optimal}, such super-expressivity is confined to particular types of CPwL functions. More explicitly, whether ReLU DNNs with $\mathcal{O}(N)$ parameters of various widths and depths can accurately replicate any linear finite element function with $N$ degrees of freedom remains unanswered. Currently, positive evidence exists only for the interval $[0,1]$, as also demonstrated in \cite{daubechies2022nonlinear,he2023optimal}.
	
	On the other hand, the expressive capabilities of neural networks employing ${\rm ReLU}^k$ activation functions for $k \geq 2$ have also been the subject of research in works such as \cite{li2019better,xu2020finite,chen2022power,opschoor2022exponential}. These inquiries focus mainly on the representation of high-order splines within the interval $[0,1]$ or global polynomials on $\mathbb{R}^d$.
	In particular, \cite{he2023expressivity} provided a constructive proof with explicit bounds for weight values on how to uniformly express polynomials of arbitrary orders using ${\rm ReLU}^k$ activation functions for any $k \geq 2$.
	
	In summary of the preceding studies on the expressivity of ${\rm ReLU}$ and ${\rm ReLU}^k$ ($k \geq 2$) DNNs, we can draw the following two intuitive conclusions:
	\begin{enumerate}
		\item ${\rm ReLU}$ DNNs can represent linear finite element functions with any mesh.
		\item ${\rm ReLU}^k$ DNNs for $k \geq 2$ can represent global polynomials of any order.
	\end{enumerate}
	
	Consequently, a natural and fundamental question emerges:
	\begin{question}
		Is it possible to represent any continuous piecewise polynomials, particularly Lagrange finite element functions of any dimension and order, using a combination of ${\rm ReLU}$ and ${\rm ReLU}^k$ in DNNs?
	\end{question}
	
	A potential strategy for combining ${\rm ReLU}$ and ${\rm ReLU}^k$ in DNNs involves the so-called DNNs with learnable or arbitrary activation functions, which have been examined in~\cite{shen2021neural,liang2021reproducing,yarotsky2021elementary,chen2022power,longo2023rham,yang2023nearlyVC,yang2023nearlyAp}. Notably, the authors in \cite{chen2022power} suggest learning the activation function for each layer as a composition of ${\rm ReLU}^k$ for $k=0$ to some positive integer $n\ge 2$ to improve the efficiency of neural networks, where ${\rm ReLU}^0(t) = t$. 
	The work in \cite{longo2023rham} then demonstrated the use of ReLU, ReLU$^k$, and Heaviside activation functions to reproduce the lowest $H(\text{div})$ and $H(\text{curl})$ finite element functions in two or three dimensions. This study also showed how to represent discontinuous piecewise polynomials by using ReLU$^k$ to recover polynomials and Heaviside functions to represent the index functions for simplicial elements.
	More recently, \cite{yang2023nearlyVC,yang2023nearlyAp} explored the approximation properties of DNNs with arbitrary ${\rm ReLU}$ or ${\rm ReLU}^2$ activation functions. Nevertheless, most of these studies concentrate on the approximation capabilities of such DNNs or how this innovative architecture, in terms of activation functions, can enhance the performance of neural networks in practical applications. 
	In particular, the expressiveness of DNNs with learnable or arbitrary activation functions has not been thoroughly explored, particularly in terms of their connection to continuous piecewise polynomials (Lagrange finite element functions) of any dimension and order.

	To better present the main study of this paper, we introduce the following fully connected DNN architecture with learnable or arbitrary activation functions:
	\begin{equation}\label{eq:defdnn}
		\begin{cases}
			&u^0(\bm{x}) = \bm{x},\\
			&\left[u^j (\bm{x})\right]_i = \sigma^j_i \left( \left[W^j u^{j-1}(\bm{x}) + b^j\right]_i \right), \quad i=1,2,\cdots,n_j,~j=1,2,\cdots,L,\\
			&u(\bm{x}) = W^{L+1} u^L(\bm{x}) + b^{L+1},
		\end{cases}
	\end{equation}
	where $\sigma_{i}^j: \mathbb{R} \mapsto \mathbb{R}$ denotes the activation function of the $i$-th neuron in the $j$-th hidden layer for all $i=i=1,2,\cdots,n_j$ and $j=1,2,\cdots,L$.
	\begin{notation}
		Specifically, we introduce the following two types of DNNs:
		\begin{itemize}
			\item Standard DNNs with fixed ${\rm ReLU}^k$ activation function:
			\begin{equation}
				\Sigma^k_{n_{1:L}} := \left\{\text{DNNs defined by \eqref{eq:defdnn} with } \sigma_i^j = {\rm ReLU}^k,~ \forall~i, j \right\} 
			\end{equation}
			for any $k \in \mathbb{N}^+$. In particular, $\Sigma^1_{n_{1:L}}$ and $\Sigma^2_{n_{1:L}}$ denote the standard DNNs with ${\rm ReLU}$ and ${\rm ReLU}^2$ activation functions, respectively.
			\item DNNs with a special learnable activation function $\sigma_i^\ell$:
			\begin{equation}\label{eq:sigma}
				\Sigma^{1,2}_{n_{1:L}} := 
				\left\{ 
				\begin{aligned}
					&\text{DNNs defined by \eqref{eq:defdnn} with} \\
					&\sigma_i^j(t) = a^j_i {\rm ReLU}(t) + b^j_i {\rm ReLU}^2(t), 
				\end{aligned}
				\right\}
			\end{equation}
			where $t \in \mathbb{R}$ and $a^j_i, b^j_i \in \mathbb{R}$ are additional trainable parameters. Generally, this indicates that the activation function for each neuron is a combination of ${\rm ReLU}$ and ${\rm ReLU}^2$.
		\end{itemize}
	\end{notation}
	Consequently, we have
	\begin{equation}
		\Sigma^k_{n_{1:L}} \subseteq \Sigma^{1,2}_{n_{1:L}},
	\end{equation}
	for $k=1$ and $2$. Thus, a more specific question arises: Can $\Sigma^{1,2}_{n_{1:L}}$ accurately represent any Lagrange finite element functions?

	In this paper, we present a positive answer to the expressive capabilities of $\Sigma^{1,2}_{n_{1:L}}$ for representing Lagrange finite element functions of any dimension and order. This result is derived from two novel formulations of the global basis functions for Lagrange finite elements on specific or arbitrary simplicial meshes, incorporating a geometric decomposition of these elements. 
	We initially examine the representation for simplicial meshes with a convex constraint for the support set (patch) of basis functions. Central to this development are two foundational lemmas concerning high-dimensional simplicial meshes and the properties of barycentric functions. 
	For arbitrary simplicial meshes, we propose another uniform formulation using the global basis functions of the linear finite element space on the same mesh. The key of this construction lies in an intrinsic property in the multiplication structure of linear basis functions on a certain sub-simplex, seen as a generalization of the previous result for high-dimensional simplicial meshes with a convex constraint.
	By integrating these new formulations with the properties of $\Sigma^{1,2}_{n_{1:L}}$, we initially achieve two representations of the basis functions. Advancing this approach with a modified structure of $\Sigma^{1,2}_{n_{1:L}}$, we also capture representations of any Lagrange finite element functions with parameter levels same to their degrees of freedom.
	We will demonstrate that the representation of Lagrange finite element functions on simplicial meshes with a convex constraint is more concise and efficient. Furthermore, by examining a uniform simplex mesh over $[0,1]^d$ that satisfies the convex constraint and exploring the approximation characteristics of these functions, we obtain an additional approximation result for $\Sigma^{1,2}_{n_{1:L}}$. This offers a more intuitive and direct method for obtaining $W^{s,p}$ error estimates for $s=0,1$ and $p \in [2,\infty]$.

	The structure of this paper is organized as follows. In Section~\ref{sec:preliminaries}, we present some preliminaries about high-dimensional simplicial meshes and the geometric decomposition of Lagrange finite elements.
	In Section~\ref{sec:febasis}, we introduce these two new formulas to explicitly and globally define the basis functions of Lagrange finite elements for both specific and arbitrary simplicial meshes. In Section~\ref{sec:feDNN}, we show how DNNs can be employed to represent Lagrange finite element functions in any dimension and of any order. In Section~\ref{sec:application}, we show an approximation result that is a corollary of our representation insights. We draw some concluding remarks in Section~\ref{sec:conclusions}, reflecting on the implications of our findings.
	
	\section{Preliminaries}\label{sec:preliminaries}
	In this section, we introduce some preliminary results about simplicial meshes, Lagrange finite elements, and the geometric decomposition of these spaces~\cite{arnold2009geometric,chen2023geometric}. 
	
	\paragraph{Simplex, sub-simplex, simplicial mesh, and their properties in $\mathbb R^d$.}
	For any finite set $\left\{\bm{v}_1, \bm{v}_2, \cdots, \bm{v}_m \right\} \subset \mathbb{R}^d$, we denote the convex combination as
	\begin{equation}
		\text{Conv}\left(\bm{v}_1, \bm{v}_2, \cdots, \bm{v}_m \right) := \left\{\bm{x} = \sum_{i=1}^{m} a_i \bm{v}_i ~:~ \sum_{i=1}^{m} a_i = 1, ~ a_i \geq 0,~\forall i \right\}.
	\end{equation}
	Then, for any set $S\subseteq \mathbb R^d$, the convex hull of $S$ $\text{Conv}\left(S\right)$ is defined as the set of all convex combinations of points in $S$.
	A set $T \subset \mathbb{R}^d$ is called a simplex if it is the closed convex hull of $d+1$ affinely independent points $\{\bm{v}_0, \bm{v}_1, \cdots, \bm{v}_{d}\}$, i.e.,
	\begin{equation}
		T = \text{Conv}\left(\bm{v}_0, \bm{v}_1, \cdots, \bm{v}_{d} \right) = 
		\left\{ \bm x ~:~ \bm{x} = \sum_{i=0}^{d} \lambda_{i}(\bm x) \bm{v}_i \right\}.
	\end{equation}
	These points are the vertices of $T$, and these affine functions $\lambda_i(\bm x)$ are the barycentric coordinate functions. 
	For any simplex $T = {\rm Conv}\left(\bm{v}_0, \bm{v}_1, \cdots, \bm{v}_{d} \right) \subseteq \mathbb R^d$, we call $f = {\rm Conv}\left(\bm{v}^f_{0}, \bm{v}^f_{1}, \cdots, \bm{v}^f_{\ell} \right)$ as a $\ell$-dimensional sub-simplex of $T$ for any $\left\{\bm{v}^f_{0}, \bm{v}^f_{1}, \cdots, \bm{v}^f_{\ell}\right\} \subseteq \left\{\bm{v}_0, \bm{v}_1, \cdots, \bm{v}_{d} \right\}$ with some $0\le \ell \le d$. 
	Following \cite{arnold2009geometric}, let $\Delta(T)$ denote all the sub-simplices of $T$, while 
	$\Delta_\ell(T)$ denotes the set of sub-simplices of dimension $\ell$ for $\ell=0,1,\ldots,d$.
	
	From then on, we denote $\Omega \subset \mathbb R^d$ as a bounded polytope domain. We say that $\mathcal{T}$ is a simplicial mesh of $\Omega$ if it consists of $d$-dimensional simplices, that is, $\Omega = \bigcup_{T \in \mathcal{T}} T$, and the intersection of any two simplices is a face (a sub-simplex) common to these two simplices if the intersection is not empty.
	Correspondingly, for the simplicial mesh $\mathcal{T}$, we define
	\begin{equation}
		\Delta_\ell \left(\mathcal{T}\right) := \bigcup_{T \in \mathcal{T}} \Delta_\ell(T).
	\end{equation}
	That is, $\Delta_\ell(\mathcal{T})$ denotes the collection of all $\ell$-dimensional sub-simplices in $\mathcal{T}$. For example, $\Delta_0(\mathcal{T})$ represents the set of all vertices in $\mathcal{T}$, $\Delta_1(\mathcal{T})$ corresponds to the set of all edges in $\mathcal{T}$, and so on, up to $\Delta_d(\mathcal{T}) = \mathcal{T}$.
	
	\paragraph{Lagrange finite elements and global basis functions in classical definition.} 
	As introduced in~\cite{arnold2009geometric}, we first define the simplicial lattice, which will be used later in establishing the local basis functions for Lagrange finite elements with $k$-th order ($P_k$ element). 
	For any $k \geq 1$, the simplicial lattice of degree $k$ is given by
	\begin{equation}
		\mathbb T_k^d := \left\{ \bm \alpha \in \mathbb N^{d+1} ~:~ |\bm \alpha| = k
		\right\},
	\end{equation}
	where $\bm \alpha = (\alpha_0,\alpha_1,\ldots,\alpha_d)$ and $|\bm \alpha| = \sum_{i=0}^d \alpha_i$. For any simplex $T = \text{Conv}\left(\bm{v}_0, \bm{v}_1, \cdots, \bm{v}_{d} \right)$, we have the following interpolation points of $P_k$ elements defined by
	\begin{equation}
		\mathcal X_k(T) = \left\{\bm x_{\bm \alpha} = \frac{1}{k}\sum_{i=0}^d \alpha_i \bm v_i ~:~ \bm \alpha \in \mathbb T_k^d \right\}.
	\end{equation}
	$\mathcal X_k(T)$ can be interpreted as the geometric embedding of the algebraic set $\mathbb T_k^d$.
	Geometrically, $\mathcal X_k(T)$ represents the intersection points within $T$ of the hyperplanes defined by $\left\{ \bm{x} : k\lambda_i(\bm{x}) - j = 0\right\}$ for all $i=0, 1, \cdots, d$ and $j=0, 1, \cdots, k$.
	The cardinality of $\mathcal X_k(T)$ is $\binom{d+k}{k}$, which is also the cardinality of $\mathbb T_k^d$ and the dimension of $\mathbb P_k(T)$ the polynomial space on $T \subset \mathbb{R}^d$ of degree at most $k$.

	For any $\bm p \in \mathcal X_k(T)$, we denote $\varphi_{\bm p}(\bm{x}) \in \mathbb P_k(T)$ as the interpolation basis function corresponding to $\bm p $ such that
	\begin{equation}
		\varphi_{\bm p} (\bm p') = \begin{cases}
			1, & \text{if } \bm{p} = \bm{p}', \\
			0, & \text{if } \bm{p} \neq \bm{p}',
		\end{cases}
	\end{equation}
	for all $\bm p, \bm p' \in \mathcal X_k(T)$.
	The following theorem provides a uniform representation of the local basis function $\varphi_{\bm{p}}(\bm{x})$ on $T$ for any $\bm p \in \mathcal X_k(T)$.
	\begin{theorem}[\cite{burkardt2013finite,chen2023geometric}]
		For any $\bm p = \bm x_{\bm \alpha} \in \mathcal X_k(T)$ with $\bm \alpha \in \mathbb T_k^d$ and $\bm{x} \in T$, the basis function is given by
		\begin{equation}
			\varphi_{\bm{p}}(\bm{x}) = \frac{1}{\bm \alpha !} \prod_{i=0}^{d} \prod_{j=0}^{\alpha_i-1} (k\lambda_{i} (\bm{x}) - j),
		\end{equation}
		where $\bm \alpha ! = \prod_{i=0}^d \left(\alpha_i !\right)$.
	\end{theorem}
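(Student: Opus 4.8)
The plan is to set $\psi_{\bm p}(\bm x) := \frac{1}{\bm \alpha!}\prod_{i=0}^{d}\prod_{j=0}^{\alpha_i-1}\bigl(k\lambda_i(\bm x)-j\bigr)$ and show that this explicit expression equals the interpolation basis function $\varphi_{\bm p}$. Because the nodal set $\mathcal X_k(T)$ has cardinality $\binom{d+k}{k} = \dim \mathbb P_k(T)$ and is unisolvent for $\mathbb P_k(T)$—which is precisely what makes the defining Kronecker conditions on $\varphi_{\bm p}$ well posed—it suffices to verify two things: that $\psi_{\bm p}\in\mathbb P_k(T)$, and that $\psi_{\bm p}$ satisfies the same nodal interpolation conditions $\psi_{\bm p}(\bm p')=\delta_{\bm p\bm p'}$ for all $\bm p'\in\mathcal X_k(T)$. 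The degree bound is immediate: each $\lambda_i$ is affine in $\bm x$, so every factor $(k\lambda_i(\bm x)-j)$ is affine, and the total number of factors in the double product is $\sum_{i=0}^d\alpha_i=|\bm\alpha|=k$, whence $\psi_{\bm p}\in\mathbb P_k(T)$.

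Next I would evaluate $\psi_{\bm p}$ at an arbitrary node $\bm x_{\bm\beta}\in\mathcal X_k(T)$ with $\bm\beta\in\mathbb T_k^d$. The key reduction is that the barycentric coordinates of $\bm x_{\bm\beta}=\frac1k\sum_{i=0}^d\beta_i\bm v_i$ are exactly $\lambda_i(\bm x_{\bm\beta})=\beta_i/k$, since the coefficients $\beta_i/k$ are nonnegative and sum to $|\bm\beta|/k=1$. Hence $k\lambda_i(\bm x_{\bm\beta})=\beta_i$, and the evaluation collapses to the purely combinatorial quantity $\psi_{\bm p}(\bm x_{\bm\beta})=\frac{1}{\bm\alpha!}\prod_{i=0}^d\prod_{j=0}^{\alpha_i-1}(\beta_i-j)$. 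On the diagonal $\bm\beta=\bm\alpha$, each inner product is the falling factorial $\prod_{j=0}^{\alpha_i-1}(\alpha_i-j)=\alpha_i!$, so $\psi_{\bm p}(\bm x_{\bm\alpha})=\frac{1}{\bm\alpha!}\prod_{i=0}^d\alpha_i!=1$, matching the required value.

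The main step—and the only part that needs a genuine argument rather than bookkeeping—is the off-diagonal vanishing. When $\bm\beta\neq\bm\alpha$, I would observe that since both multi-indices have the same total $|\bm\beta|=|\bm\alpha|=k$, there must exist an index $i_0$ with $\beta_{i_0}<\alpha_{i_0}$; otherwise $\beta_i\ge\alpha_i$ for every $i$ with a strict inequality somewhere would force $|\bm\beta|>|\bm\alpha|$, a contradiction. For that $i_0$ the integer $\beta_{i_0}$ lies in $\{0,1,\ldots,\alpha_{i_0}-1\}$, so the factor $(\beta_{i_0}-j)$ with $j=\beta_{i_0}$ is exactly zero and annihilates the entire product, giving $\psi_{\bm p}(\bm x_{\bm\beta})=0$. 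I expect this pigeonhole-type index argument to be the conceptual crux; once it is in place, the degree bound together with the diagonal and off-diagonal evaluations shows that $\psi_{\bm p}$ satisfies the defining conditions of $\varphi_{\bm p}$, and uniqueness from unisolvence yields $\psi_{\bm p}=\varphi_{\bm p}$, completing the proof.
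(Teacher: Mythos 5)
The paper does not prove this theorem at all: it is imported as a known result with citations to \cite{burkardt2013finite,chen2023geometric}, so there is no in-paper argument to compare against. Your proof is the standard one and is correct: the degree count $\sum_i \alpha_i = k$ gives $\psi_{\bm p}\in\mathbb P_k(T)$, the identity $k\lambda_i(\bm x_{\bm\beta})=\beta_i$ reduces everything to falling factorials, the diagonal evaluation yields $\bm\alpha!/\bm\alpha!=1$, and the pigeonhole step (some $\beta_{i_0}<\alpha_{i_0}$ because $|\bm\beta|=|\bm\alpha|$) correctly forces a zero factor off the diagonal. One small remark: you invoke unisolvence of $\mathcal X_k(T)$ as an external fact to get uniqueness of $\varphi_{\bm p}$, but your own construction already delivers it --- the $\binom{d+k}{k}$ polynomials $\psi_{\bm p}$ are linearly independent by the Kronecker property and hence form a basis of $\mathbb P_k(T)$, so any $q\in\mathbb P_k(T)$ vanishing at all nodes is zero; stating this would make the argument fully self-contained.
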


	\begin{definition}\label{def:indexpatch}
		For any simplicial mesh $\mathcal{T}$ of $\Omega$ with $N$ total elements (simplices) on $\Omega$, let us assume the simplices in $\mathcal{T}$ are ordered as $T_i$ for $i=1,\cdots,N$ and denote 
		\begin{equation}
			I_{S} := \{ i : S \subseteq T_i \} \quad \text{and} \quad P_{S} := \bigcup_{S \subseteq T \in \mathcal T} T = \bigcup_{i = I_{S}} T_i
		\end{equation}
		as the index set and patch set for any $S \subseteq \Omega$. With a slight abuse of notation, we denote $I_{\bm x}$ and $P_{\bm x}$ if $S = \{\bm x\}$ is a set of a single point.
	\end{definition}
	By aggregating $\mathcal X_T$ for all $T \in \mathcal{T}$, we obtain the set 
	\begin{equation}
		\mathcal X_k({\mathcal T}) := \bigcup_{T \in \mathcal{T}} \mathcal X_k(T).
	\end{equation}
	Therefore, the global basis function $\varphi_{\bm p}(\bm{x})$ for any $\bm{p} \in \mathcal X_k({\mathcal T})$ constructed by gluing $\left. \varphi_{\bm{p}}(\bm{x})\right|_{T_i}$ together for all $i \in I_{\bm{p}}$. 
	More precisely, $\varphi_{\bm{p}}(\bm{x}) = 0 $ if $\bm x \notin  P_{\bm p}$ and 
	for any $s \in I_{\bm{p}}$ with $\bm p = \bm x_{\alpha} \in \mathcal X_k({T_s})$, we have
	\begin{equation}\label{eq:basisn}
		\varphi_{\bm{p}}(\bm{x}) = \frac{1}{\bm \alpha! }\prod_{i=0}^{d} \prod_{j=0}^{\alpha_i-1} (k\lambda_{s,i} (\bm{x}) - j), \quad \forall \bm x \in T_s,
	\end{equation}
	where $\lambda_{s,i}(\bm x)$ are the barycentric coordinate functions corresponding to the $i$-th vertex in $T_s$ for $i=0,1,\cdots,d$. 
	Obviously, we have $\text{supp}(\varphi_{\bm{p}}) = P_{\bm p}$
	for any $\bm{p} \in \mathcal X_k({\mathcal T})$, where $\text{supp}(f(\bm{x})) := \overline{\{ \bm{x} : f(\bm{x}) \neq 0 \}}$ denotes the support set of $f(\bm{x})$. 
	
	\paragraph{Geometric Decomposition of $\mathcal X_k({\mathcal T})$.}
	As seen from the definition of the global basis function $\varphi_{\bm{p}}(\bm{x})$, it is necessary to determine the corresponding index $\bm{\alpha}$ such that $\bm{p} = \bm{x}_{\bm{\alpha}} \in \mathcal{X}_k(T_s)$ for different $T_s$ with $s \in I_{\bm{p}}$. The key observation is that some invariant properties of $\bm{\alpha}$ for $\bm{p}$ across different $T_s$ can be identified using a geometric decomposition of $\mathcal{X}_k(\mathcal{T})$. To better formulate this decomposition, for any two sets $A$ and $B$, we denote $A \oplus B = A \cup B$ if $A \cap B = \emptyset$.
	
	First, we define the local decomposition of $\mathcal{X}_k(T)$ for any $T \in \mathcal{T}$. Here, $\mathcal{X}_k^\ell(T)$ is defined by
	\begin{equation}
		\mathcal{X}_k^\ell(T) = \bigoplus_{f \in \Delta_\ell(T)} \mathcal{X}_k(\mathring{f}),
	\end{equation}
	where $\mathcal{X}_k(\mathring{f}) = \mathcal{X}_k(T) \cap \mathring{f}$. Here, $\mathring{f}$ denotes the interior of the $\ell$-dimensional sub-simplex $f$, and $\mathring{f} = f$ if $\ell = 0$.
	That is, $\mathcal{X}_k^\ell(T)$ represents those interpolation points in $\mathcal{X}_k(T)$ that are part of some $\ell$-dimensional sub-simplices in $\Delta_\ell(T)$ but do not belong to any sub-simplex in $\Delta_{\ell-1}(T)$. 
	Then, the decomposition of $\mathcal{X}_k(T)$ is defined as
	\begin{equation}\label{eq:decomXkT}
		\mathcal{X}_k(T) = \bigoplus_{\ell=0}^d \mathcal{X}_k^\ell(T).
	\end{equation}
	Figure~\ref{fig:2dp4} illustrates an example of the decomposition of $\mathcal{X}_4(T)$ on a 2D simplex $T$.

	\begin{figure}[H]
		\centering
		\begin{tikzpicture}[scale = 1.5 ]
			\draw (0,0) -- (4,0) -- (0,4) -- cycle;
			
			\fill [fill=blue, opacity=0.5] (0,0) circle (1.2ex);
			\fill [fill=blue, opacity=0.5] (0,4) circle (1.2ex);
			\fill [fill=blue, opacity=0.5] (4,0) circle (1.2ex);
			\fill [fill=green, opacity=0.5] (-0.15,0.85) rectangle (0.15,3.15);
			\fill [fill=green, opacity=0.5] (0.85,-0.15) rectangle (3.15,0.15);
			\fill [fill=green, opacity=0.5] (3,0.8) -- (3.23,1) -- (1,3.23) -- (0.8,3);
			\fill [fill=yellow, opacity=0.5] (0.88,0.88) -- (2.3,0.85) -- (0.85,2.3);
			
			\draw [dashed] (1,0) -- (1,3);
			\draw [dashed] (2,0) -- (2,2);
			\draw [dashed] (3,0) -- (3,1);
			\draw [dashed] (0,1) -- (3,1);
			\draw [dashed] (0,2) -- (2,2);
			\draw [dashed] (0,3) -- (1,3);
			\draw [dashed] (0,1) -- (1,0);
			\draw [dashed] (0,2) -- (2,0);
			\draw [dashed] (0,3) -- (3,0);
			\draw[black,fill=black] (0,0) circle (.4ex);
			\draw[black,fill=black] (0,1) circle (.4ex);
			\draw[black,fill=black] (0,2) circle (.4ex);
			\draw[black,fill=black] (0,3) circle (.4ex);
			\draw[black,fill=black] (0,4) circle (.4ex);
			\draw[black,fill=black] (1,0) circle (.4ex);
			\draw[black,fill=black] (1,1) circle (.4ex);
			\draw[black,fill=black] (1,2) circle (.4ex);
			\draw[black,fill=black] (1,3) circle (.4ex);
			\draw[black,fill=black] (2,0) circle (.4ex);
			\draw[black,fill=black] (2,1) circle (.4ex);
			\draw[black,fill=black] (2,2) circle (.4ex);
			\draw[black,fill=black] (3,0) circle (.4ex);
			\draw[black,fill=black] (3,1) circle (.4ex);
			\draw[black,fill=black] (4,0) circle (.4ex);
		\end{tikzpicture}
		\caption{
			An example of the decomposition of $\mathcal{X}_4(T)$ on a 2D simplex $T$ is illustrated. Here, $\mathcal{X}_4^0(T)$ denotes all the vertices of $T$ (marked in blue), $\mathcal{X}_4^1(T)$ represents all nodes on the interiors of edges (marked in green), and $\mathcal{X}_4^2(T)$ includes all nodes in the interior of the simplex $T$ (marked in yellow).
		}
		\label{fig:2dp4}
	\end{figure}
	Finally, by defining $\mathcal{X}_k^\ell(\mathcal{T}) = \bigcup_{T \in \mathcal T} \mathcal X_k^\ell(T) = \bigoplus_{f \in \Delta_\ell(\mathcal{T})} \mathcal{X}_k(\mathring{f})$, we have
	\begin{equation}
		\mathcal{X}_k(\mathcal{T}) = \bigoplus_{\ell=0}^d \mathcal{X}_k^\ell(\mathcal{T}) = \bigoplus_{\ell=0}^d \bigoplus_{f \in \Delta_\ell(\mathcal{T})} \mathcal{X}_k(\mathring{f}).
	\end{equation}
	One of the first properties given by the above decomposition is that
	\begin{equation}\label{eq:IpIf}
		I_{\bm{p}} = I_{f} \quad \text{and} \quad P_{\bm{p}} = P_{f}
	\end{equation}
	if $\bm{p} \in \mathcal{X}_k^\ell(\mathcal{T})$ for some $f \in \Delta_\ell(\mathcal{T})$ and $\bm{p} \in \mathring{f}$, or $\bm{p} \in \mathcal{X}_k(\mathring{f})$ for some $f \in \Delta_\ell(\mathcal{T})$.
	
	The geometric decomposition for Lagrange elements has been widely studied in \cite{arnold2006finite,arnold2009geometric,chen2021geometric,chen2023geometric}. However, these studies mainly focus on the local decomposition of polynomial spaces $\mathbb{P}_k(T)$. In the following, we will demonstrate how to use this decomposition and global basis for linear elements to derive two new formulations for global basis functions of Lagrange finite elements of any order on both specific and arbitrary simplicial meshes.
	
	\section{New formulations of global Lagrange element basis functions of any order on any dimensions}\label{sec:febasis}
	In this section, we introduce two innovative formulations of the global basis functions for Lagrange finite elements. The first formulation emerges from a geometric constraint for the mesh $\mathcal{T}$, coupled with two foundational insights pertaining to high-dimensional simplicial meshes and barycentric functions. The second formulation generalizes the result by removing the geometric constraint for the mesh $\mathcal{T}$.
	
	Given the decomposition of $\mathcal{X}_k(\mathcal{T})$ mentioned above, for any $\bm{p} \in \mathcal{X}_k^\ell(\mathcal{T})$ and $\bm{p} \in f = \text{Conv}(\bm{v}_0^f, \bm{v}_1^f, \ldots, \bm{v}_\ell^f)$, we first observe that there uniquely exist $\alpha_i \in \mathbb{N}^+$ for $i=0,1,\ldots,\ell$ with $\sum_{i=0}^\ell \alpha_i = k$ such that
	\begin{equation}
		\bm{p} = \frac{1}{k}\sum_{i=0}^\ell \alpha_i \bm{v}_i^f.
	\end{equation}
	To simplify the expression of our main theorem, we introduce the following definition.
	\begin{definition}\label{def:alpha}
		For any $0 \le \ell \le d$, $\bm{p} \in \mathcal{X}_k^\ell(\mathcal{T})$, and $\bm{p} \in \mathring{f}$ with $f \in \Delta_\ell(\mathcal{T})$, we always define the $\ell+1$ vertices in $f$ as the first $\ell+1$ vertices in $T_s$ for each $s \in I_{\bm{p}}$. 
	\end{definition}
	Consequently, for any $i \in I_{\bm{p}}$, we have $\bm{p} \in \mathcal{X}_k^\ell(T_i)$ and $\bm{p} = \bm{x}_{\bm{\alpha}}$ where 
	\begin{equation}
		\bm{\alpha} = (\alpha_0, \cdots, \alpha_\ell, 0, \ldots, 0) \in \mathbb{T}_k^d.
	\end{equation}
	That is, the index $\bm{\alpha}$ of $\bm{p} \in \mathcal{X}_k^\ell(\mathcal{T})$ depends only on the $f$ where $\bm p \in \mathring f$ and is an invariant on different $T_s$ for $s\in I_{\bm p}$ by applying Definition~\ref{def:alpha}.
	
	\paragraph{A novel formulation for global basis functions on vertex-convex meshes.}
	\begin{definition}For any $0\le \ell \le d$, we call a simplicial mesh $\mathcal T$ of $\Omega$ is ``$\ell$-convex'' if $P_f = \bigcup_{f  \subseteq T \in \mathcal T} T$ is convex for any $\ell$-dimensional sub-simplex $f \in \Delta_\ell(\mathcal  T)$. In particular, we call $\mathcal T$ ``vertex-convex'' if it is $0$-convex. Obviously, $\mathcal T$ is $d$-convex.
	\end{definition}
	
	\begin{lemma}\label{lem:vconvex}
		For any simplicial mesh $\mathcal{T}$ on $\Omega$ and $1\ < \ell < d$, $\mathcal T$ is $\ell$-convex if it is $(\ell-1)$-convex.
	\end{lemma}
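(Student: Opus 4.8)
The plan is to reduce the convexity of $P_f$ to the convexity of the patches of the \emph{facets} of $f$, which are exactly the patches made available by the hypothesis. Fix an $\ell$-dimensional sub-simplex $f=\mathrm{Conv}(\bm v_0,\dots,\bm v_\ell)\in\Delta_\ell(\mathcal T)$ and let $g_i=\mathrm{Conv}(\bm v_0,\dots,\widehat{\bm v_i},\dots,\bm v_\ell)$, for $i=0,\dots,\ell$, be its $\ell+1$ facets, each an $(\ell-1)$-dimensional sub-simplex in $\Delta_{\ell-1}(\mathcal T)$. Since $\mathcal T$ is $(\ell-1)$-convex, every $P_{g_i}$ is convex, and hence so is the intersection $K:=\bigcap_{i=0}^\ell P_{g_i}$. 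I would then prove the set identity $P_f=K$, after which the $\ell$-convexity of $\mathcal T$ is immediate. (The argument needs only $\ell\ge 1$, and $\ell=d$ is trivial since $P_f=f$; so it covers the stated range $1<\ell<d$ uniformly.)

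For the inclusion $P_f\subseteq K$, I would first record the combinatorial identity $I_f=\bigcap_{i=0}^\ell I_{g_i}$: because the mesh is conforming, $f\subseteq T$ is equivalent to $\mathrm{vert}(f)\subseteq\mathrm{vert}(T)$, and $\bigcup_i\mathrm{vert}(g_i)=\mathrm{vert}(f)$ (using $\ell\ge1$), so an element contains every facet $g_i$ precisely when it contains $f$. Consequently each $T\in I_f$ satisfies $T\subseteq P_{g_i}$ for all $i$, giving $P_f=\bigcup_{T\in I_f}T\subseteq K$. In particular $K$ contains a full-dimensional simplex and is therefore a full-dimensional convex body.

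The reverse inclusion $K\subseteq P_f$ is the crux, and it is where convexity is genuinely used. The key observation is a one-element-at-a-time statement: for any $T\in\mathcal T$, the open interior $\mathring T$ meets $K$ if and only if $T\in I_f$. Indeed, since distinct elements have disjoint interiors, $\mathring T\cap P_{g_i}=\mathring T$ when $T\in I_{g_i}$ and $\mathring T\cap P_{g_i}=\emptyset$ otherwise; intersecting over $i$ and using $I_f=\bigcap_i I_{g_i}$ gives the claim. Now I would invoke the fact that a full-dimensional convex set equals the closure of its interior: away from the nowhere-dense mesh skeleton, every point of $\mathring K$ lies in a unique $\mathring T$, which by the observation forces $T\in I_f$; hence $\mathring K$ is contained in $\overline{\bigcup_{T\in I_f}\mathring T}=P_f$, and therefore $K=\overline{\mathring K}\subseteq P_f$. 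Combining the two inclusions yields $P_f=K$, which is convex.

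The main obstacle is precisely this reverse inclusion: without convexity the identity $P_f=\bigcap_i P_{g_i}$ is simply false (a single point can form a sub-simplex with every facet $g_i$ while no element contains $f$ together with that point), so the proof must exploit that $K$, being an intersection of convex patches, is a convex body and hence cannot carry the lower-dimensional \emph{spikes} that would stick out of $P_f$. I expect the only points requiring care to be the standard conformity facts — that a sub-simplex contained in an element is a face of it, and that the interiors of distinct elements are disjoint — together with the density of $\mathring K$ minus the skeleton in $\mathring K$, all of which are routine.
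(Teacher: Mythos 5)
Your proof is correct and follows essentially the same route as the paper: both reduce the claim to the identity $P_f=\bigcap_{i=0}^{\ell}P_{\widetilde f_i}$ over the facets of $f$ (with the easy inclusion $P_f\subseteq\bigcap_i P_{\widetilde f_i}$ handled identically) and then use convexity of the intersection to exclude lower-dimensional pieces. The only difference is in that final step: the paper rules out a lower-dimensional component by a ball/volume argument inside $\mathrm{Conv}(\{\bm x\}\cup\mathcal S_d)$, whereas you invoke $K=\overline{\mathring K}$ for a closed, full-dimensional convex set together with the disjointness of element interiors --- an equivalent, and if anything cleaner, way to finish.
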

	
	\begin{proof}
		For any $f = {\rm Conv}(\bm{v}_0^f, \bm v_1^f\ldots, \cdots \bm{v}_\ell^f) \in \Delta_\ell(\mathcal T)$ with some $0 < \ell < d$, let us define
		$$
		\widetilde f_i = {\rm Conv}(\bm{v}_0^f, \cdots, \bm v_{i-1}^f, \bm v_{i+1}^f, \cdots \bm{v}_\ell^f) \in \Delta_{\ell-1}(\mathcal T).
		$$
		Thus, $P_{\widetilde f_i}$ are convex for all $i=0,1,\ldots,\ell$.
		To complete the proof, we show that
		\begin{equation}
			P_f = \bigcap_{i=0}^\ell P_{\widetilde f_i},
		\end{equation}
		where $P_f$ and $P_{\widetilde f_i}$ are defined as in Definition~\ref{def:indexpatch}. 
		This will be sufficient to demonstrate that $P_f$ is convex, given the convexity of $P_{\widetilde f_i}$.
		
		First, for any $T \in \mathcal T$ such that $f \subseteq T$, we have $\widetilde f_i \subset T$ for all $i=0,1,\ldots, \ell$. That is, $T \subseteq P_{\widetilde f_i }$ for all $i=0,1,\ldots, \ell$, i.e.,
		\begin{equation}\label{eq:PfPi}
			P_f \subseteq \bigcap_{i=0}^\ell P_{\widetilde f_i}.
		\end{equation}
		Furthermore, by using the associative property of set algebra, we have
		\begin{equation*}
			\bigcap_{i=0}^\ell P_{\widetilde f_i} = \bigcap_{i=0}^\ell \bigcup_{j_i \in I_{\widetilde f_i}} T_{j_i} = \bigcup_{\substack{j_0 \in I_{\widetilde f_0} \\ \cdots \\ j_\ell \in I_{\widetilde f_0}}} \bigcap_{i=0}^\ell T_{j_i}.
		\end{equation*}
		Given the property of simplicial mesh, $\bigcap_{i=0}^\ell T_{j_i}$ can only be a sub-simplex or a simplex. Consequently, we have
		\begin{equation}\label{eq:decompPfi}
			\bigcap_{i=0}^\ell P_{\widetilde f_i} = \bigcup_{\ell=0}^d \mathcal S_\ell = \mathcal S_d \bigcup \mathcal S', 
		\end{equation}
		where $\mathcal S_\ell \subset \Delta_\ell (\mathcal T)$ denote the $\ell$-dimensional sub-simplices for all $\ell=0,1,\ldots,d$ and $\mathcal S' = \bigcup_{\ell=0}^{d-1}\mathcal S_\ell $.
		We first claim that $\mathcal S_d = P_f$. For any $T \in \mathcal T$ and $T \subseteq \mathcal S_d \subseteq \bigcap_{i=0}^\ell P_{\widetilde f_i}$, we have $\widetilde f_i \subseteq T$.
		Thus, $f \subset T$ and $\mathcal S_d = P_f$. 
		
		Now, we prove that $\mathcal S' = \emptyset$. If not, there exists $\bm x \in \mathcal S'$ and $\bm x \notin \mathcal S_d$. Hence, we have 
		$$
		{\rm dist}(\bm x, \mathcal S_d) := \inf_{\bm x' \in \mathcal S_d} \|\bm x-\bm x'\| = \delta > 0
		$$
		since $\mathcal S_d = P_f$ is a closed set. Then, for any $T \in \mathcal T$ and $T \subseteq \mathcal S_d$, we have that
		$$
		B_{\bm x} := B(\bm x, \delta/2) \bigcap {\rm Conv}\left(\{\bm x\} \cup \mathcal S_d \right) 
		$$
		has nonzero $d$-dimensional volume and $B_{\bm x} \bigcap \mathcal S_d = \emptyset$ where $B(\bm x, \delta/2)$ denotes the $d$-dimensional ball centered at $\bm x$ with radius $\delta/2$. 
		However, we have
		\begin{equation}
			B_{\bm x} \subseteq {\rm Conv}\left( \mathcal S' \bigcup \mathcal S_d \right) = 
			{\rm Conv}\left( \bigcap_{i=0}^\ell P_{\widetilde f_i} \right) = \bigcap_{i=0}^\ell P_{\widetilde f_i} = \mathcal S' \bigcup \mathcal S_d
		\end{equation}
		since $P_{\widetilde f_i}$ is convex for each $i=0,1\ldots,\ell$. 
		Thus, we have $B_{\bm x} \subset \mathcal S'$ since $B_{\bm x} \bigcap \mathcal S_d = \emptyset$. This is impossible since $\mathcal S'$ consists of low-dimensional sub-simplices which have only zero $d$-dimensional volume. 
		This finishes the proof.
	\end{proof}
	
	\begin{theorem}\label{thm:basisall-convex}
		For any vertex-convex simplicial mesh $\mathcal{T}$ of $\Omega$ for the $P_k$ element and $0 \le \ell \le d$, by taking Definition~\ref{def:alpha}, the basis function $\varphi_{\bm{p}}$ corresponding to the interpolation point $\bm{p} \in \mathcal X_k^\ell(\mathcal T)$ can be written as
		\begin{equation}\label{eq:varphip-convex}
			\varphi_{\bm{p}}(\bm{x}) = \frac{1}{\bm \alpha !} \min_{i' \in I_{\bm{p}}}\left\{\prod_{i=0}^\ell k{\rm ReLU}(\lambda_{i',i}(\bm{x})) \right\} \times 
			\prod_{i=0}^\ell \prod_{j=1}^{\alpha_i-1}\left(k\min_{i' \in I_{\bm{p}}}\left\{\lambda_{i',i} (\bm{x}) \right\} -j \right),
		\end{equation}
		where $\lambda_{i',i}(\bm x)$ denote the barycentric coordinates functions corresponding to the $i$-th vertex in the element $T_{i'}$ for all $i'\in I_{\bm p}$ and $i=0,1,\ldots,\ell$. With a slight abuse of notation, we assume $\lambda_{i',i}(\bm x)$ are defined on $\mathbb R^d$.
	\end{theorem}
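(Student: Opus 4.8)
The plan is to establish \eqref{eq:varphip-convex} as a pointwise identity for $\bm x\in\Omega$, by comparing it with the classical piecewise description of $\varphi_{\bm p}$. Recall from \eqref{eq:IpIf} that $I_{\bm p}=I_f$ and $P_{\bm p}=P_f$, and that Definition~\ref{def:alpha} fixes $\bm\alpha=(\alpha_0,\dots,\alpha_\ell,0,\dots,0)$ with $\alpha_i\ge 1$ for $i\le\ell$. Since $\varphi_{\bm p}$ is continuous, equals the local formula \eqref{eq:basisn} on each $T_s$ with $s\in I_{\bm p}$, and vanishes on $\Omega\setminus P_{\bm p}$, and since the right-hand side of \eqref{eq:varphip-convex} is continuous (a composition of affine maps, $\relu$, products and minima), it suffices to prove two facts: (i) on every $T_s$, $s\in I_{\bm p}$, the right-hand side equals \eqref{eq:basisn}; and (ii) the right-hand side vanishes on $\Omega\setminus P_{\bm p}$. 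I would emphasize at the start that the identity is asserted only on $\Omega$: the affinely extended barycentric factors need not reproduce $\varphi_{\bm p}=0$ far outside $\Omega$.

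The engine of the proof is the following consequence of convexity. For each $i'\in I_{\bm p}$ and each $i\in\{0,\dots,\ell\}$, the facet of $T_{i'}$ opposite the shared vertex $\bm v_i^f$ is a boundary facet of $P_f$: if it were shared with another element $T''$, that $T''$ would lie on the far side of the carrying hyperplane and hence miss $\bm v_i^f$, so $f\not\subseteq T''$ and $T''\notin I_f$, a contradiction. Because $\mathcal T$ is vertex-convex, Lemma~\ref{lem:vconvex} (applied inductively from the vertex-convex hypothesis) makes $P_f$ convex, so the hyperplane $\{\lambda_{i',i}=0\}$ carrying this boundary facet is a supporting hyperplane of $P_f$; as $\bm v_i^f\in P_f$ with $\lambda_{i',i}(\bm v_i^f)=1>0$, we conclude $P_f\subseteq\{\lambda_{i',i}\ge 0\}$. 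In particular $\lambda_{i',i}\ge 0$ on every $T_s\subseteq P_f$. From this I obtain the barycentric min-identity: for $\bm x\in T_s$ the affine function $g=\lambda_{i',i}-\lambda_{s,i}$ vanishes at all $\ell+1$ vertices of $f$ (hence on $\mathrm{aff}(f)$) and is nonnegative at the remaining vertices $\bm w$ of $T_s$, since there $\lambda_{s,i}(\bm w)=0\le\lambda_{i',i}(\bm w)$; being affine, $g\ge 0$ throughout $T_s$, i.e. $\lambda_{s,i}(\bm x)=\min_{i'\in I_{\bm p}}\lambda_{i',i}(\bm x)$.

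Granting this, fact (i) is a direct computation on $T_s$. In the second factor of \eqref{eq:varphip-convex} I substitute $\min_{i'\in I_{\bm p}}\lambda_{i',i}=\lambda_{s,i}$ to get $\prod_{i=0}^\ell\prod_{j=1}^{\alpha_i-1}(k\lambda_{s,i}-j)$. For the first factor, the choice $i'=s$ makes every $\relu$ act trivially (each $\lambda_{s,i}\ge 0$) and yields $\prod_{i=0}^\ell k\lambda_{s,i}$, while for any other $i'\in I_{\bm p}$ the inequality $\lambda_{i',i}\ge\lambda_{s,i}\ge 0$ gives $\prod_{i=0}^\ell k\,\relu(\lambda_{i',i})=\prod_{i=0}^\ell k\lambda_{i',i}\ge\prod_{i=0}^\ell k\lambda_{s,i}$, so the outer minimum is attained at $i'=s$ and equals $\prod_{i=0}^\ell k\lambda_{s,i}$. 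Multiplying the two factors and absorbing the $j=0$ terms recovers $\frac{1}{\bm\alpha!}\prod_{i=0}^\ell\prod_{j=0}^{\alpha_i-1}(k\lambda_{s,i}-j)$, which is exactly \eqref{eq:basisn} once the empty products for $i>\ell$ are accounted for.

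Fact (ii) is the step I expect to be the main obstacle. The first factor of \eqref{eq:varphip-convex} is a minimum of nonnegative products, so it is positive only where $\lambda_{i',i}(\bm x)>0$ for all $i'\in I_{\bm p}$ and all $i\le\ell$; hence (ii) is equivalent to $\Omega\cap\bigcap_{i'\in I_{\bm p}}\bigcap_{i=0}^\ell\{\lambda_{i',i}\ge 0\}\subseteq P_f$. The inclusion $P_f\subseteq\bigcap\{\lambda_{i',i}\ge 0\}$ is already given by the engine above. For the reverse, I would take $\bm x\in\Omega\setminus P_f$ and a point $\bm c$ in the interior of the convex set $P_f$, and follow the segment from $\bm c$ to $\bm x$ to its exit point $\bm y\in\partial P_f$; when $\bm y$ lies on an interior boundary facet of $P_f$ it sits in some $\{\lambda_{i',i}=0\}$ with $P_f$ on the nonnegative side, and linearity of $\lambda_{i',i}$ along the segment forces $\lambda_{i',i}(\bm x)<0$, so $\bm x$ fails the corresponding constraint and the first factor vanishes. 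The genuine difficulty, and the part requiring the most care, is the bookkeeping when $f$ or the relevant facet lies on $\partial\Omega$ and when $\Omega$ itself is non-convex: there one must argue that leaving $P_f$ while remaining both in $\Omega$ and in all of the half-spaces $\{\lambda_{i',i}\ge 0\}$ is impossible, using that every boundary facet of $P_f$ interior to $\Omega$ is of the opposite-vertex type identified above. Once this is settled, the first factor vanishes off $P_{\bm p}$, establishing (ii) and completing the proof.
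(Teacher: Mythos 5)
Your overall strategy coincides with the paper's, which packages the argument into Lemma~\ref{lem:suppphik} (the half-space description of $\mathrm{supp}(\varphi_{\bm p})=P_f$), Lemma~\ref{lem:minlambda} (the barycentric min-identity), and Lemma~\ref{lem:minprodlambda} (the behaviour of the first factor on and off $P_{\bm p}$), combined exactly as in your fact (i). Your proof of the min-identity is a pleasant variant: the paper parametrizes the segment from $\bm v_i^f$ through $\bm x$ to a point $\bm y_{\bm x}$ on the facet $F_{s,i}$ and uses affinity along it, whereas you note that $\lambda_{i',i}-\lambda_{s,i}$ is affine, vanishes at the $\ell+1$ vertices of $f$, and is nonnegative at the remaining vertices of $T_s$; both rest on the same input $P_f\subseteq\{\lambda_{i',i}\ge 0\}$, which you obtain correctly from the supporting-hyperplane argument, and your version is arguably cleaner. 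The computation in fact (i) matches the paper's concluding display.

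The step you leave open --- that $\Omega\cap\bigcap_{i'\in I_{\bm p}}\bigcap_{i=0}^{\ell}\{\lambda_{i',i}\ge 0\}\subseteq P_f$, which is what fact (ii) needs --- is precisely the paper's Lemma~\ref{lem:suppphik}, whose own proof is terse on the same point. The case you worry about, a non-convex $\Omega$, cannot actually occur: vertex-convexity forces $\Omega$ to be convex. Indeed, a boundary point $\bm y$ at which $\Omega$ fails to be locally convex lies in $\mathring f$ for some sub-simplex $f$, and for small $\epsilon$ every mesh simplex meeting $B(\bm y,\epsilon)$ contains $f$, so $P_{\bm v}\cap B(\bm y,\epsilon)=\Omega\cap B(\bm y,\epsilon)$ for any vertex $\bm v$ of $f$; if the latter is non-convex, so is $P_{\bm v}$. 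A closed, connected, everywhere locally convex set is convex. Once $\Omega$ is known to be convex, your segment argument closes without casework: the convex $d$-polytope $P_f$ equals the intersection of the closed half-spaces of its facets, each facet being either of the opposite-vertex type $F_{i',i}$ (contributing $\{\lambda_{i',i}\ge 0\}$) or contained in $\partial\Omega$, in which case its half-space supports the convex set $\Omega$ and is absorbed by intersecting with $\Omega$. So your proposal is correct in outline; to be complete you must either supply this observation or invoke Lemma~\ref{lem:suppphik} directly.
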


	\begin{lemma}\label{lem:suppphik}
		With the same assumptions as in Theorem~\ref{eq:varphip-convex}, we have
		\begin{equation}\label{eq:suppphilambda}
			\text{supp}(\varphi_{\bm{p}}) = P_{\bm p} = P_{f} = \bigcap_{ i' \in I_{\bm{p}}} \bigcap_{i=0}^\ell \left\{ \bm{x} \in \Omega : \lambda_{i',i}(\bm{x}) \ge 0 \right\},
		\end{equation}
		where $\bm p \in \mathring f$ for some $f \in \Delta_\ell (\mathcal T)$.
	\end{lemma}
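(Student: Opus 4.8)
The plan is to split the claim into its two cheap identities plus one substantive set equality. The identities $\mathrm{supp}(\varphi_{\bm p}) = P_{\bm p}$ and $P_{\bm p} = P_f$ are already available: the first is the support property recorded right after \eqref{eq:basisn}, and the second is \eqref{eq:IpIf}, valid since $\bm p \in \mathring f$ for $f \in \Delta_\ell(\mathcal T)$. So everything reduces to proving the polyhedral description
\[
P_f = \bigcap_{i' \in I_{\bm p}} \bigcap_{i=0}^\ell \left\{ \bm x \in \Omega : \lambda_{i',i}(\bm x) \ge 0 \right\} =: R,
\]
which I would establish by two inclusions. Throughout I would use that $P_f$ is convex: since $\mathcal T$ is vertex-convex, induction through Lemma~\ref{lem:vconvex} makes it $\ell$-convex, so $P_f$ is convex for every $f \in \Delta_\ell(\mathcal T)$. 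I would also invoke Definition~\ref{def:alpha}, which arranges $\bm v_0^f,\dots,\bm v_\ell^f$ as the first $\ell+1$ vertices of each $T_{i'}$, so that for $0\le i\le\ell$ the function $\lambda_{i',i}$ is exactly the barycentric coordinate attached to $\bm v_i^f$ inside $T_{i'}$.

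For the inclusion $P_f \subseteq R$, I would fix $i' \in I_{\bm p}$ and $0 \le i \le \ell$ and let $F$ be the facet of $T_{i'}$ opposite $\bm v_i^f$, so $F \subseteq \{\lambda_{i',i} = 0\}$ while $T_{i'} \subseteq \{\lambda_{i',i} \ge 0\}$. The key observation is that $F \subseteq \partial P_f$: since $F$ omits $\bm v_i^f$ it does not contain $f$, so the unique mesh neighbour $T'$ of $T_{i'}$ across $F$ (if any) cannot contain $f$, hence $T' \notin I_{\bm p}$ and $T' \not\subseteq P_f$; thus near the relative interior of $F$ the patch $P_f$ coincides with $T_{i'}$ and lies entirely in $\{\lambda_{i',i}\ge 0\}$. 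Therefore $\{\lambda_{i',i}=0\}$ supports the convex set $P_f$ along $F$, and convexity forces $P_f \subseteq \{\lambda_{i',i}\ge 0\}$. Intersecting over all $i'$ and $i$, and using $P_f\subseteq\Omega$, yields $P_f \subseteq R$.

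For $R \subseteq P_f$, I would argue by contradiction along a one-dimensional slice. Suppose $\bm x \in R \setminus P_f$, pick an interior point $\bm c \in \mathrm{int}(P_f)$ (the barycenter of one patch simplex works), and consider the segment $[\bm c,\bm x]$. As the constraints $\{\lambda_{i',i}\ge 0\}$ are half-spaces, $[\bm c,\bm x]$ lies in $\bigcap_{i',i}\{\lambda_{i',i}\ge 0\}$; and since $P_f$ is convex and closed, $P_f \cap [\bm c,\bm x] = [\bm c,\bm y]$ for an exit point $\bm y \in \partial P_f$ with $\bm x$ strictly beyond $\bm y$. The goal is to show $\bm y$ sits on a facet $F$ of some $T_{i'}$ opposite an $f$-vertex $\bm v_i^f$; then $\lambda_{i',i}$ is affine along the segment, equals $\lambda_{i',i}(\bm c) > 0$ at the interior point $\bm c$, vanishes at $\bm y \in F$, and is hence strictly negative at $\bm x$, contradicting $\bm x \in R$.

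The hard part is exactly this localization of the exit facet, and it is where the convexity furnished by Lemma~\ref{lem:vconvex} is indispensable. The facets of the convex polytope $P_f$ come in two kinds: those opposite an $f$-vertex (lying on some $\{\lambda_{i',i}=0\}$) and those containing $f$ but lying on $\partial\Omega$. I would rule out the exit point sitting only on facets of the second kind using that $\bm c$ is interior and the whole segment stays in the convex region $\bigcap_{i',i}\{\lambda_{i',i}\ge 0\}$: leaving $P_f$ while still satisfying every constraint $\lambda_{i',i}\ge 0$ must happen across a facet on which one such constraint becomes active, i.e. a facet opposite an $f$-vertex. Carefully treating the interaction with $\partial\Omega$, together with the measure-zero case where $\bm y$ lands on a lower-dimensional face (handled by the closedness of both sides and a small perturbation of $\bm c$), completes the reverse inclusion. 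Combining the resulting equality $P_f = R$ with $\mathrm{supp}(\varphi_{\bm p}) = P_{\bm p} = P_f$ gives the full chain in \eqref{eq:suppphilambda}.
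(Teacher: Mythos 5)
Your proposal follows essentially the same route as the paper's (much terser) proof: the first two identities from \eqref{eq:basisn} and \eqref{eq:IpIf}, and the third from the convexity of $P_f$ supplied by Lemma~\ref{lem:vconvex} together with the identification of the interior facets of $P_f$ with the zero sets of the $\lambda_{i',i}$ — your two inclusions are a careful unpacking of the paper's one-line assertion that the convex polytope $P_f$ is the intersection of the half-spaces determined by its facets. The one step you explicitly leave open, the interaction with $\partial\Omega$ in the reverse inclusion, is also passed over silently in the paper; it can be closed by observing that vertex-convexity of $\mathcal T$ forces the local cone of $\Omega$ at every boundary mesh vertex to be convex (so in particular the relevant segment from an interior point of $P_f$ stays in $\Omega$ and cannot exit $P_f$ through a facet lying on $\partial\Omega$, whose relative interior has a full $\Omega$-neighbourhood inside $P_f$), so the exit facet is necessarily one on which some $\lambda_{i',i}$ vanishes, exactly as your argument requires.
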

	\begin{proof}
		Given the definition of $\varphi_{\bm{n}}(\bm{x})$ in \eqref{eq:basisn} and properties in \eqref{eq:IpIf}, the first two identities hold. 
		For the last identity, we first observe the convexity of $\text{supp}(\varphi_{\bm{p}}) = P_f$ since $\mathcal T$ is vertex-convex.
		Moreover, the boundary set $\partial(\text{supp}(\varphi_{\bm{p}}))$ consists of faces 
		\begin{equation}\label{eq:faceij}
			F_{i',i}:= \left\{ \bm{x} \in T_{i'} : \lambda_{i',i}(\bm{x}) = 0 \right\},
		\end{equation}
		for all $i' \in I_{\bm{p}}$ and $i=0,1,\ldots,\ell$. Additionally, we have $\left\{ \bm{x} \in \Delta_i : \lambda_{i,j}(\bm{x}) \ge 0 \right\} \subseteq \text{supp}(\varphi_{\bm{n}})$. Thus, the $d$-dimensional polytope $P_f$ can be expressed as the intersection of half-spaces as in \eqref{eq:suppphilambda}.   
	\end{proof}
	
	Then, to prove the main theorem, let us establish the following lemmas regarding important properties of $\min_{i' \in I_{\bm{p}}}\left\{\lambda_{i',i} (\bm{x}) \right\}$ in $\varphi_{\bm{p}}(\bm{x})$ and $\min_{i' \in I_{\bm{p}}}\left\{\prod_{i=0}^\ell {\rm ReLU}\left(\lambda_{i',i}(\bm{x})\right) \right\}$, under the assumption that $\mathcal T$ is a vertex-convex mesh. 
	
	\begin{lemma}\label{lem:minlambda}
		For any vertex-convex simplicial mesh $\mathcal{T}$ of $\Omega$ for the $P_k$ element and $\bm{p} \in \mathcal X_k^\ell(\mathcal T)$ with some $0 \le \ell \le d$, by taking Definition~\ref{def:alpha}, we have
		\begin{equation}
			\min_{i' \in I_{\bm{p}}}\left\{\lambda_{i',i} (\bm{x}) \right\} = \lambda_{s,i}(\bm{x})
		\end{equation}
		for any $0\le i\le \ell$ and $\bm{x} \in T_s$ with $s \in I_{\bm{p}}$.
	\end{lemma}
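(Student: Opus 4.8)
The plan is to reduce the statement to the pointwise comparison $\lambda_{s,i}(\bm x) \le \lambda_{i',i}(\bm x)$ for every $i' \in I_{\bm p}$ and every $\bm x \in T_s$; once this holds, the minimum over $I_{\bm p}$ is visibly attained at $i' = s$, since the family includes $\lambda_{s,i}$ itself. The central tool will be the half-space description of the patch established in Lemma~\ref{lem:suppphik}, which is precisely where the vertex-convexity hypothesis enters the argument.

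First I would fix $i$ with $0 \le i \le \ell$ and two indices $s, i' \in I_{\bm p}$, and label the vertices of $T_s$ so that $\bm v_0, \ldots, \bm v_\ell$ are the vertices of the shared sub-simplex $f$ (exactly the ordering convention of Definition~\ref{def:alpha}) and $\bm v_{\ell+1}, \ldots, \bm v_d$ are the remaining vertices of $T_s$. Writing $\bm x \in T_s$ in barycentric coordinates of $T_s$ as $\bm x = \sum_{m=0}^d \lambda_{s,m}(\bm x)\,\bm v_m$ with all $\lambda_{s,m}(\bm x) \ge 0$, and using that $\lambda_{i',i}$ is affine on $\mathbb R^d$, I would expand $\lambda_{i',i}(\bm x) = \sum_{m=0}^d \lambda_{s,m}(\bm x)\,\lambda_{i',i}(\bm v_m)$. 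The key simplification is again supplied by Definition~\ref{def:alpha}: since $\bm v_0, \ldots, \bm v_\ell$ are the first $\ell+1$ vertices of both $T_s$ and $T_{i'}$, the function $\lambda_{i',i}$ takes the value $\delta_{im}$ at these shared vertices, so all terms with $m \le \ell$ collapse to the single term $\lambda_{s,i}(\bm x)$. This yields the identity
\begin{equation*}
\lambda_{i',i}(\bm x) - \lambda_{s,i}(\bm x) = \sum_{m=\ell+1}^d \lambda_{s,m}(\bm x)\,\lambda_{i',i}(\bm v_m),
\end{equation*}
which reduces the whole lemma to the sign of the coefficients $\lambda_{i',i}(\bm v_m)$.

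I expect this last nonnegativity to be the main (and essentially only) obstacle, and I would dispatch it by invoking Lemma~\ref{lem:suppphik}. Each $\bm v_m$ with $m > \ell$ is a vertex of $T_s$, hence lies in $T_s \subseteq P_f = P_{\bm p}$, because $s \in I_{\bm p}$ means $f \subseteq T_s$. By the half-space characterization $P_f = \bigcap_{i' \in I_{\bm p}} \bigcap_{i=0}^\ell \{ \bm x : \lambda_{i',i}(\bm x) \ge 0 \}$ it then follows that $\lambda_{i',i}(\bm v_m) \ge 0$. Since the weights $\lambda_{s,m}(\bm x)$ are also nonnegative on $T_s$, the displayed difference is nonnegative, giving $\lambda_{s,i}(\bm x) \le \lambda_{i',i}(\bm x)$ for all $i' \in I_{\bm p}$, with equality at $i' = s$. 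Taking the minimum over $I_{\bm p}$ completes the proof. The role of vertex-convexity is thus entirely encapsulated in Lemma~\ref{lem:suppphik}: it is the convexity of the patch $P_f$ that guarantees its description as an intersection of half-spaces, and hence the sign of the coefficients $\lambda_{i',i}(\bm v_m)$ on which everything hinges.
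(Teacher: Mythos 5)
Your proof is correct, and it reaches the key inequality $\lambda_{s,i}(\bm x)\le\lambda_{i',i}(\bm x)$ by a genuinely different decomposition than the paper's. The paper writes $\bm x = t_{\bm x}\bm v_i^f + (1-t_{\bm x})\bm y_{\bm x}$, where $\bm y_{\bm x}$ is the point where the ray from $\bm v_i^f$ through $\bm x$ meets the opposite face $F_{s,i}$ of $T_s$; it then uses only $\lambda_{i',i}(\bm v_i^f)=1$ together with $\lambda_{i',i}(\bm y_{\bm x})\ge 0$ (the latter from convexity of $P_{\bm p}$ via Lemma~\ref{lem:suppphik}) and affineness to get $\lambda_{i',i}(\bm x)\ge t_{\bm x}=\lambda_{s,i}(\bm x)$. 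You instead expand $\bm x$ in the full barycentric coordinates of $T_s$, use the Kronecker-delta property $\lambda_{i',i}(\bm v_m^f)=\delta_{im}$ at all $\ell+1$ shared vertices (which is exactly what Definition~\ref{def:alpha} buys you), and reduce everything to the signs of $\lambda_{i',i}(\bm v_m)$ at the $d-\ell$ vertices of $T_s$ outside $f$, which are again nonnegative by the half-space description in Lemma~\ref{lem:suppphik}. Both arguments channel the vertex-convexity hypothesis entirely through that half-space characterization, so the logical dependence is identical; what your version buys is a purely algebraic identity
\begin{equation*}
\lambda_{i',i}(\bm x)-\lambda_{s,i}(\bm x)=\sum_{m=\ell+1}^{d}\lambda_{s,m}(\bm x)\,\lambda_{i',i}(\bm v_m)
\end{equation*}
that localizes the needed nonnegativity at finitely many vertices and avoids constructing the auxiliary point $\bm y_{\bm x}$ (whose existence and uniqueness the paper asserts but which degenerates at $\bm x=\bm v_i^f$). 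The paper's version is more geometric and uses less of Definition~\ref{def:alpha}, but your route is arguably cleaner to verify.
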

	
	\begin{proof}
		Assuming that $\bm p \in \mathring f$ with some $ f = {\rm Conv}\left(\bm v_0^f, \bm v_1^f, \ldots, \bm v_\ell^f \right) \in \Delta_\ell(\mathcal T)$, for any $i=0,1,\ldots,\ell$, we have
		\begin{equation}
			\lambda_{i',i}(\bm{v}_i^f) = 1, \quad \forall i' \in I_{\bm{p}}.
		\end{equation}
		For any $\bm{x} \in T_s$ with some $s \in I_{\bm{p}}$ and $0\le i \le \ell$, given Lemma~\ref{lem:suppphik}, there is a unique point $\bm{y}_{\bm{x}} \in F_{s,i} = \left\{ \bm{x} \in T_s : \lambda_{s,i}(\bm{x}) = 0 \right\}$ such that there exists $t_{\bm{x}} \in [0,1]$ and
		\begin{equation}
			\bm{x} = t_{\bm{x}}\bm{v}_i^f + (1 - t_{\bm{x}})\bm{y}_{\bm{x}}.
		\end{equation}
		Since $\text{supp}(\varphi_{\bm{p}}) = P_{\bm p}$ is convex as shown in Lemma~\ref{lem:vconvex}, we have
		\begin{equation}
			\lambda_{i',i}(\bm{y}_{\bm{x}}) \ge 0, \quad \forall i' \in I_{\bm{p}},
		\end{equation}
		and in particular,
		\begin{equation}
			\lambda_{s,i}(\bm{y}_{\bm{x}}) = 0.
		\end{equation}
		Since $\lambda_{i,j}(\bm{x})$ are all affine maps, it follows that
		\begin{equation}
			\begin{aligned}
				\lambda_{i',i}(\bm{x}) = & \lambda_{i',i} \left(t_{\bm{x}}\bm{v}_i^f + (1 - t_{\bm{x}})\bm{y}_{\bm{x}}\right) \\
				= & t_{\bm{x}}\lambda_{i',i} (\bm{v}_i^f) +  (1 - t_{\bm{x}}) \lambda_{i',i} (\bm{y}_{\bm{x}}) \\
				\ge & t_{\bm{x}} \\
				= & t_{\bm{x}}\lambda_{s,i} (\bm{v}_i^f) +  (1 - t_{\bm{x}}) \lambda_{s,i} (\bm{y}_{\bm{x}}) \\
				= & \lambda_{s,i}(\bm{x})
			\end{aligned}
		\end{equation}
		for any $i \in I_{\bm{p}}$. Thus, the proof is complete.
	\end{proof}
	
	\begin{remark}
		Here, we note that a core identity used in \cite{he2020relu} and \cite{yarotsky2018optimal} to represent linear finite element functions using ReLU DNNs is essentially a special case of $\ell=0$ in the above lemma. An illustration of the proof is provided in Fig~\ref{fig:2dp2} for $\bm{p} \in \mathcal X_2^1(\mathcal T)$ within a 2D simplicial mesh.
	\end{remark}
	
	\begin{figure}[h]
		\centering
		\begin{tikzpicture}[scale = 2 ]
			\draw[gray, step = 2cm] (0, 0) grid (2, 2);
			\node[anchor = north] at (1, 0) {$\lambda_{s,0}=0$};
			\node[anchor = south, rotate=90] at (0,1) {$\lambda_{s,1}=0$};
			\node[anchor = south] at (1,2) {$\lambda_{s',1}=0$};
			\node[anchor = north, rotate=90] at (2,1) {$\lambda_{s',0}=0$};
			\node[anchor = south east] at (0,2) {$\bm v_0^f$};
			\node[anchor = north west] at (2,0) {$\bm v_1^f$};
			\node[anchor = west] at (1,1) {$\bm p$};
			\node at (0.7, 0.7) {$T_s$};
			\node at (1.4, 1.4) {$T_{s'}$};
			\draw (0,2) -- (2,0);
			\draw[blue, thick] (0,0) -- (2,0) -- (2,2) -- (0,2) -- (0,0);
			\draw[black,fill=black] (1,1) circle (.3ex);
			\draw[black,fill=black] (0,0) circle (.2ex);
			\draw[black,fill=black] (0,1) circle (.2ex);
			\draw[black,fill=black] (0,2) circle (.2ex);
			\draw[black,fill=black] (1,2) circle (.2ex);
			\draw[black,fill=black] (1,0) circle (.2ex);
			\draw[black,fill=black] (2,0) circle (.2ex);
			\draw[black,fill=black] (2,1) circle (.2ex);
			\draw[black,fill=black] (2,2) circle (.2ex);
			
			\node[anchor = south west] at (0.225,0.975) {$\bm x$};
			\node[anchor = north] at (0.65,0.25) {$\bm y_{\bm x}$};
			\draw[black,fill=black] (0.25,1) circle (.1ex);
			\draw[black,fill=black] (0.5,0) circle (.1ex);
			\draw[red, thick] (0,2) -- (0.25,1) -- (0.5,0);
			
		\end{tikzpicture}
		\caption{
			An example of $\bm p \in \mathcal X_2^1(\mathcal T)$ on 2D, where $I_{\bm p} = \{s ,s'\}$ and $\bm p \in f = {\rm Conv}(\bm v_0^f, \bm v_1^f)$. We have $\min\left\{\lambda_{s,i}(\bm x), \lambda_{s',i}(\bm x)\right\} = \lambda_{s,i}(\bm x)$ for any $\bm x \in T_s$ and $i=0,1$.}
		\label{fig:2dp2}
	\end{figure}

	\begin{lemma}\label{lem:minprodlambda}
		For any vertex-convex simplicial mesh $\mathcal{T}$ of $\Omega$ for the $P_k$ element and $\bm{p} \in \mathcal X_k^\ell(\mathcal T)$ with some $0 \le \ell \le d$, by taking Definition~\ref{def:alpha}, we have
		\begin{equation}
			\min_{i' \in I_{\bm{p}}}\left\{\prod_{i=0}^\ell {\rm ReLU}\left(\lambda_{i',i}(\bm{x})\right) \right\} = \begin{cases}
				\prod_{i=0}^\ell \lambda_{s,i}(\bm{x}) & \text{if } \bm{x} \in T_s \text{ and } s \in I_{\bm{p}} \\
				0 & \text{if } \bm{x} \notin P_{\bm p}.
			\end{cases}
		\end{equation}
	\end{lemma}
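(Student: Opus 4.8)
The plan is to split along the two cases in the statement and reduce each to the two preceding lemmas. The guiding observation is that every factor ${\rm ReLU}(\lambda_{i',i}(\bm{x}))$ is nonnegative, so each product $\prod_{i=0}^\ell {\rm ReLU}(\lambda_{i',i}(\bm{x}))$ is nonnegative and hence so is the minimum over $i' \in I_{\bm{p}}$. Consequently it suffices either to locate where the product is smallest (when $\bm{x}$ lies in the patch) or to exhibit a single vanishing factor (when $\bm{x}$ lies outside it).

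For the first case I would fix $\bm{x} \in T_s$ with $s \in I_{\bm{p}}$. Since $\bm{x} \in T_s$, every barycentric coordinate satisfies $\lambda_{s,i}(\bm{x}) \ge 0$, so the $i'=s$ term carries no truncation and equals $\prod_{i=0}^\ell \lambda_{s,i}(\bm{x})$. For an arbitrary $i' \in I_{\bm{p}}$, Lemma~\ref{lem:minlambda} gives $\lambda_{i',i}(\bm{x}) \ge \min_{i'' \in I_{\bm{p}}}\lambda_{i'',i}(\bm{x}) = \lambda_{s,i}(\bm{x}) \ge 0$ for each $0 \le i \le \ell$, so ${\rm ReLU}(\lambda_{i',i}(\bm{x})) = \lambda_{i',i}(\bm{x})$. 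Multiplying these factorwise inequalities between nonnegative numbers yields $\prod_{i=0}^\ell {\rm ReLU}(\lambda_{i',i}(\bm{x})) \ge \prod_{i=0}^\ell \lambda_{s,i}(\bm{x})$, so the minimum over $i'$ is attained at $i'=s$ and equals $\prod_{i=0}^\ell \lambda_{s,i}(\bm{x})$. The same chain of inequalities (applied to two elements $T_s, T_{s'}$ both containing $\bm{x}$) shows the value is independent of the chosen $T_s$, so the formula is well defined on element overlaps.

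For the second case I would suppose $\bm{x} \notin P_{\bm{p}}$. By Lemma~\ref{lem:suppphik} we have $P_{\bm{p}} = P_f = \bigcap_{i' \in I_{\bm{p}}} \bigcap_{i=0}^\ell \{\lambda_{i',i} \ge 0\}$, so $\bm{x} \notin P_{\bm{p}}$ forces $\lambda_{i'_0, i_0}(\bm{x}) < 0$ for at least one $i'_0 \in I_{\bm{p}}$ and some $0 \le i_0 \le \ell$. Then ${\rm ReLU}(\lambda_{i'_0, i_0}(\bm{x})) = 0$, so the corresponding product vanishes; combined with the nonnegativity of every product, the minimum equals $0$.

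The genuine content lives in Lemma~\ref{lem:minlambda} (already established), which, using vertex-convexity, converts the global ``minimum of barycentric coordinates'' into the local coordinate of the containing element; once that is in hand the present lemma is essentially bookkeeping. The one point I expect to require care is the half-space characterization invoked in the second case: I should note that the intersection with $\Omega$ appearing in Lemma~\ref{lem:suppphik} is harmless, because $P_{\bm{p}}$ is a bounded polytope whose facet hyperplanes lie among $\{\lambda_{i',i} = 0\}$, and a bounded polytope equals the intersection of its facet-defining closed half-spaces over all of $\mathbb{R}^d$. This guarantees a strictly negative barycentric coordinate at every $\bm{x} \notin P_{\bm{p}}$, even outside $\Omega$, which is precisely what makes the minimum vanish there.
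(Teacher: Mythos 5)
Your proposal is correct and follows essentially the same route as the paper's proof: nonnegativity of each ReLU product plus the half-space characterization of $P_{\bm p}$ from Lemma~\ref{lem:suppphik} handles the exterior case, and Lemma~\ref{lem:minlambda} handles the interior case. You are in fact slightly more explicit than the paper in justifying that the minimum of the products is attained at $i'=s$ (via the factorwise inequalities between nonnegative numbers) and in noting that the restriction to $\Omega$ in Lemma~\ref{lem:suppphik} is harmless; both are worthwhile clarifications of steps the paper leaves implicit.
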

	\begin{proof}
		Given Lemma~\ref{lem:suppphik}, we observe that $\prod_{i=0}^\ell {\rm ReLU}\left(\lambda_{i',i}(\bm{x})\right) \ge 0$ for all $i' \in I_{\bm p}$. Then, if $\bm{x} \notin  P_{\bm p}$, there exists a $\lambda_{i,i}$ for some $i' \in I_{\bm{p}}$ and $0\le i \le \ell$ such that $\lambda_{i',i}(\bm{x}) < 0$. Therefore, $\min_{i' \in I_{\bm{p}}}\left\{\prod_{i=0}^\ell {\rm ReLU}\left(\lambda_{i',i}(\bm{x})\right) \right\} = 0$ if $\bm{x} \notin P_{\bm p}$.
		
		For any $\bm{x} \in \text{supp}(\varphi_{\bm{p}}) = P_{\bm p}$, there exists $s \in I_{\bm{p}}$ such that $\bm{x} \in T_s$. Moreover, we have $\lambda_{i',i}(\bm{x}) \ge 0$ for all $i' \in I_{\bm{p}}$ and $i = 0, 1, \ldots, \ell$ due to the connection in \eqref{eq:suppphilambda}. Combined with Lemma~\ref{lem:minlambda}, it follows that
		\begin{equation}
			\min_{i' \in I_{\bm{p}}}\left\{\prod_{i=0}^\ell {\rm ReLU}\left(\lambda_{i',i}(\bm{x})\right) \right\} = \prod_{i=0}^\ell \lambda_{s,i}(\bm{x})
		\end{equation}
		when $\bm{x} \in T_s$ for some $s \in I_{\bm{p}}$. This completes the proof.
	\end{proof}
	
	\paragraph{Proof of Theorem~\ref{thm:basisall-convex}} 
	\begin{proof}
		Given Lemma~\ref{lem:minlambda} and Lemma~\ref{lem:minprodlambda}, for any $\bm x \in T_s$ with some $s \in I_{\bm p}$, we have
		\begin{equation}
			\begin{aligned}
				&\frac{1}{\bm \alpha !} \min_{i' \in I_{\bm p}}\left\{\prod_{i=0}^\ell {\rm ReLU}\left(\lambda_{i',i}(\bm x)\right) \right\} \times 
				\prod_{i=0}^\ell\prod_{j=1}^{\alpha_i-1}\left(k\min_{i' \in I_{\bm p}}\left\{\lambda_{i',i} (\bm x) \right\} -j\right) \\
				=&\frac{1}{\bm \alpha !} \left\{\prod_{i=0}^\ell {\rm ReLU}\left(\lambda_{s,i}(\bm x)\right) \right\} \times 
				\prod_{i=0}^\ell\prod_{j=1}^{\alpha_i-1}\left(k \lambda_{s,i} (\bm x) -j\right) \\
				=&\frac{1}{\bm \alpha !} \prod_{i=0}^\ell\prod_{j=0}^{\alpha_i-1}\left(k\left\{\lambda_{s,i} (\bm x) \right\} -j \right) =\frac{1}{\bm \alpha !} \prod_{i=0}^{d}\prod_{j=0}^{\alpha_i-1}\left(k\left\{\lambda_{s,i} (\bm x) \right\} -j \right).
			\end{aligned}
		\end{equation}
	\end{proof}

	\paragraph{A novel formulation for global basis functions on arbitrary simplicial meshes.}
	In this section, we propose another new formulation for the global basis functions of Lagrange elements on arbitrary simplicial meshes. We construct the piecewise polynomial basis functions by using the multiplication of piecewise linear functions, which are the basis functions of linear finite elements on the same simplicial mesh. The most crucial step in this construction is based on a generalization of Lemma~\ref{lem:vconvex}.
	
	We start our construction with the following observation. For $\ell=0$, i.e., $\bm p \in \mathcal X_k^0(\mathcal T) = \Delta_0(\mathcal T)$, we have
	\begin{equation*}
		\varphi_{\bm{p}}(\bm{x}) = \frac{1}{\alpha_0 !} 
		\prod_{j=0}^{\alpha_0 -1}\left(k \lambda_{s,0}(\bm{x}) -j \right)
	\end{equation*}
	for  any $\bm x \in T_s$ for $s \in I_{\bm p}$. 
	\begin{definition}
		For any simplicial mesh $\mathcal T$ of $\Omega \subset \mathbb R^d$, let $\phi_{\bm v}$ denote the piecewise linear basis function at $\bm v \in \mathcal X_k^0({\mathcal T}) = \Delta_0(\mathcal T)$, i.e., 
		\begin{equation*}
			\phi_{\bm v}(\bm x) = \begin{cases}
				\lambda_{s,j} \quad &\text{if } \bm x \in T_s, \text{ for some } s \in I_{\bm v} \text{ and } \bm v \text{ is the } j\text{-th vertex in } T_i  \\
				0\quad &\text{if } x \notin \bigcup_{ s \in I_{\bm v}} T_s = P_{\bm v}.
			\end{cases}
		\end{equation*}
	\end{definition}
	Consequently, the aforementioned global basis function $\varphi_{\bm{p}}(\bm{x})$ for any $\bm p \in \mathcal X_k^0(\mathcal T)$ can be represented as:
	\begin{equation}\label{eq:basis0}
		\varphi_{\bm{p}}(\bm{x}) = \frac{1}{\alpha_0 !} 
		\prod_{j=0}^{\alpha_0 -1}\left(k \phi_{\bm p}(\bm{x}) -j \right), \quad \forall \bm x \in \Omega.
	\end{equation}
	Here, we notice that $\varphi_{\bm{p}}(\bm{x}) = 0$ for any $\bm x \notin P_{\bm v}$ since $\prod_{j=0}^{\alpha_0 -1}\left(k \phi_{\bm p}(\bm{x}) -j \right) =k \phi_{\bm p}(\bm{x}) \times \prod_{j=1}^{\alpha_0 -1}\left(k \phi_{\bm p}(\bm{x}) -j \right)$.
	
	A natural question arises as to whether we can generalize \eqref{eq:basis0} to arbitrary $\ell > 0$. A main observation of this effort is summarized as the following lemma, which can be understood as a generalization of Lemma~\ref{lem:vconvex}.
	\begin{lemma}\label{lem:linearbasis}
		For any simplicial mesh $\mathcal T$ and $f \in \Delta_\ell(\mathcal T)$ with $0\le \ell \le d$ and $f = {\rm Conv}(\bm v_0^f, \bm v_1^f \ldots, \bm v_\ell^f)$, we have
		\begin{equation*}
			\prod_{i=0}^\ell \phi_{\bm v_i^f}(\bm x) = 0, \quad \text{if} \quad \bm x \notin \bigcup_{f \subseteq T \in \mathcal T} T = P_f.
		\end{equation*}
	\end{lemma}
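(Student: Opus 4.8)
The plan is to prove the contrapositive: assuming $\prod_{i=0}^\ell \phi_{\bm v_i^f}(\bm x) \neq 0$, I will show that $\bm x \in P_f$. Since a product of real numbers is nonzero exactly when every factor is nonzero, the hypothesis gives $\phi_{\bm v_i^f}(\bm x) \neq 0$ for each $i = 0, 1, \ldots, \ell$. In particular, by the definition of $\phi_{\bm v_i^f}$ (which vanishes off $P_{\bm v_i^f}$), we have $\bm x \in P_{\bm v_i^f} \subseteq \Omega$, so $\bm x$ lies in at least one mesh element; fix one such element $T \in \mathcal T$ with $\bm x \in T$.

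The heart of the argument is the claim that each vertex $\bm v_i^f$ is a vertex of $T$. I would establish this by contradiction. Suppose $\bm v_i^f$ is not a vertex of $T$ for some $i$. Because $\phi_{\bm v_i^f}(\bm x) \neq 0$ we have $\bm x \in P_{\bm v_i^f}$, so $\bm x \in T_s$ for some $s \in I_{\bm v_i^f}$, i.e.\ $\bm v_i^f$ is a vertex of $T_s$. Then $T \neq T_s$, since $\bm v_i^f$ is a vertex of $T_s$ but not of $T$, so by the definition of a simplicial mesh their intersection $g = T \cap T_s$ is a common sub-simplex, and $\bm x \in g$. Since $g \subseteq T$ and $\bm v_i^f$ is not a vertex of $T$, the point $\bm v_i^f$ cannot be a vertex of $g$ either; hence, viewing $g$ as a sub-simplex of $T_s$, it is contained in the facet of $T_s$ opposite $\bm v_i^f$, on which the corresponding barycentric coordinate vanishes. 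Evaluating $\phi_{\bm v_i^f}$ through $T_s$ then gives $\phi_{\bm v_i^f}(\bm x) = \lambda_{s,j}(\bm x) = 0$, where $\bm v_i^f$ is the $j$-th vertex of $T_s$, contradicting $\phi_{\bm v_i^f}(\bm x) \neq 0$.

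With the claim in hand the conclusion is immediate: all $\ell + 1$ vertices $\bm v_0^f, \ldots, \bm v_\ell^f$ are vertices of the single element $T$, so $f = {\rm Conv}(\bm v_0^f, \ldots, \bm v_\ell^f) \subseteq T$, whence $T$ is one of the elements in the union defining $P_f$ and $\bm x \in T \subseteq P_f$. This is exactly the contrapositive of the stated implication.

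I expect the main obstacle to be the boundary bookkeeping inside the key claim, namely handling the case where $\bm x$ sits on a shared face of several elements and verifying that the relevant barycentric coordinate is forced to zero there. This is the step where the simplicial-mesh hypothesis (that any two elements meet in a common sub-simplex) does the real work, and it is the natural generalization of the mechanism in Lemma~\ref{lem:vconvex}: instead of intersecting the patches $P_{\widetilde f_i}$, one now intersects the supports of the linear hat functions $\phi_{\bm v_i^f}$ and observes that these can overlap with full $d$-dimensional measure only on elements that contain all of $f$.
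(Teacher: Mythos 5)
Your proof is correct, but it takes a genuinely different route from the paper's. The paper mimics the proof of Lemma~\ref{lem:vconvex}: it writes $\bigcap_{i=0}^\ell P_{\bm v_i^f} = \mathcal S_d \cup \mathcal S'$ with $\mathcal S_d = P_f$ and $\mathcal S'$ a union of lower-dimensional sub-simplices, and then shows via a topological argument (interior balls, the observation $f \not\subset f'$ for $f' \subseteq \mathcal S'$) that every $f' \subseteq \mathcal S'$ lies in $\bigcup_i \partial P_{\bm v_i^f}$, where some factor $\phi_{\bm v_i^f}$ vanishes. You instead prove the contrapositive pointwise: if all factors are nonzero at $\bm x$, fix any element $T \ni \bm x$ and show each $\bm v_i^f$ must be a vertex of $T$, because otherwise the face-to-face property forces $\bm x$ into the common sub-simplex $g = T \cap T_s$, which (having no vertex at $\bm v_i^f$) sits inside the facet of $T_s$ opposite $\bm v_i^f$, killing $\lambda_{s,j}(\bm x)$. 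What your argument buys is brevity and robustness: it avoids the decomposition into $\mathcal S_d \cup \mathcal S'$, avoids any measure/ball reasoning, and does not rely on the assertion that $\phi_{\bm v}$ vanishes on all of $\partial P_{\bm v}$ (which is delicate for vertices on $\partial\Omega$, where $\bm v \in \partial P_{\bm v}$ yet $\phi_{\bm v}(\bm v)=1$). What the paper's version buys is the explicit identification of the residual set $\mathcal S'$, which connects to the illustrative example in Fig.~3.3 and to the parallel statement and proof of Lemma~\ref{lem:vconvex}; your final remark correctly identifies this as the same underlying mechanism, namely that the supports of the hat functions can overlap with full dimension only on elements containing all of $f$.
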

	\begin{proof}
		This proof mainly follows the proof of Lemma~\ref{lem:vconvex}. First, we have 
		$$
		\prod_{i=0}^\ell \phi_{\bm v_i^f}(\bm x) = 0 \quad \text{ if }\quad  \bm x \notin \bigcap_{i=0}^\ell P_{v_i^f}.
		$$
		Then, similar to the proof of Lemma~\ref{lem:vconvex}, we have the following decomposition
		$$
		\bigcap_{i=0}^\ell P_{\bm v_i^f} = \bigcup_{\ell=0}^d \mathcal S_\ell = \mathcal S_d \bigcup \mathcal S'.
		$$
		If $\mathcal S'$ is nonempty, a key observation here is that 
		\begin{equation}\label{eq:propertyS}
			f \not\subset f' \quad \text{ for any } \quad f' \in \Delta_{\ell'}(\mathcal T) \text{ and } f' \subseteq \mathcal S'.
		\end{equation}
		
		Thus, we can complete the proof by showing that $\prod_{i=0}^\ell \phi_{\bm v_i^f}(\bm x) = 0$ for all $\bm x \in f' \subseteq \mathcal S'$ for any $f' \in \Delta_{\ell'}(\mathcal T)$ and $f' \subset \mathcal S'$. For this purpose, we claim 
		\begin{equation}\label{eq:f'boundary}
			f' \subset \bigcup_{i=0}^\ell \left(\partial P_{\bm v_i^f}\right)
		\end{equation}
		for any $f' \subseteq \mathcal S'$ with $f' \in \Delta_{\ell'}(\mathcal T)$. 
		If not, there exists $\bm x \in f' \subseteq \mathcal S'$ for some $f'\in \Delta_{\ell'}(\mathcal T)$ with $\ell' < d$ such that $\bm x \in \bigcap_{i=0}^\ell \mathring P_{\bm v_i^f}$. 
		That is, there exists a $d$-dimensional ball $B(\bm x, r)$ centered at $\bm x$ with radius $r>0$ such that $B(\bm x, r) \subset \mathring P_{\{\bm v_{i}^f\}}$ for all $i=0,1,\ldots,\ell$. Hence, we have $f' \bigcap B(\bm x, r) \subset \mathring P_{\{\bm v_{i}^f\}}$. This implies that $f' \subset P_{\bm v_{i}^f}$ since $\mathring{f'} \bigcap \mathring P_{\bm v_{i}^f} \neq \emptyset$ for all $i=0,1,\ldots,\ell$. 
		Therefore, there exists $T_{\bm x, i} \in \mathcal T$ with $T_{\bm x, i} \subset P_{\{\bm v_{i}^f\}}$ such that $\bm x \in f' \subset \partial T_{\bm x, i}$ for all $i=0,1,\ldots,\ell$. 
		Furthermore, noting that $F_{\bm x, i} \subset \partial P_{\bm v_{i}^f}$ where $F_{\bm x, i}$ denotes the $(d-1)$-hyperplane face corresponding to vertex $\bm v_i^f$ in $T_{\bm x, i}$, we have $\bm x \notin F_{\bm x, i} \in \Delta_{d-1}(T_{\bm x, i})$. Thus, any sub-simplex that contains $\bm x$ must also contain vertex $\bm v_i^f$. That is, $\bm v_i^f \in f'$ for all $i = 0,1,\ldots,\ell$ since $\bm x \in f'$ is a $\ell'$-dimensional sub-simplex. Therefore, $f = {\rm Conv}(\bm v_0^f, \bm v_1^f \ldots, \bm v_\ell^f) \subseteq f'$. This contradicts the observation \eqref{eq:propertyS}. Thus, the claim \eqref{eq:f'boundary} is correct. Consequently, $\prod_{i=0}^\ell \phi_{\bm v_i^f}(\bm x) = 0$ for all $\bm x \in \mathcal S'$ since $\phi_{\bm v_i^f}(\bm x) = 0$ for all $\bm x \in \partial P_{\bm v_i^f}$ for any $i=0,1,\cdots,\ell$. This finishes the proof.
	\end{proof}

	\begin{remark}
		Here, we note that $\mathcal S'$ can be nonempty if $\mathcal T$ is not a vertex-convex mesh. Fig.~\ref{fig:2dnonconvexmesh} demonstrates an example on $[0,1]^2$.
	\end{remark}
	\begin{figure}[h]
		\centering
		\begin{tikzpicture}[scale = 2 ]
			\node[anchor = east] at (1,1) {$\bm v_0^f$};
			\node[anchor = west] at (2,1) {$\bm v_1^f$};
			\node[anchor = south] at (1.5,0.85) {$f$};
			\node[anchor = west] at (1.42,1.7) {$f'$};
			\draw[blue, thick] (0,0) -- (3,0) -- (3,3) -- (0,3) -- (0,0);
			\draw[blue, thick] (0,0) -- (1,1) -- (2,1) -- (3,0);
			\draw[blue, thick] (1,1) -- (1.5,0) -- (2,1);
			\draw[blue, thick] (1,1) -- (1.5,1.5) -- (2,1);
			\draw[blue, thick] (1,1) -- (1.5,2) -- (2,1);
			\draw[blue, thick] (1,1) -- (0,3);
			\draw[blue, thick] (2,1) -- (3,3);
			\draw[red, thick] (1.5,1.5) -- (1.5,2);
			\draw[blue, thick] (0,3) -- (1.5,2) -- (3,3);
			\draw[black,fill=black] (1,1) circle (.1ex);
			\draw[black,fill=black] (0,0) circle (.1ex);
			\draw[black,fill=black] (1.5,0) circle (.1ex);
			\draw[black,fill=black] (1.5,1.5) circle (.1ex);
			\draw[black,fill=black] (1.5,2) circle (.1ex);
			\draw[black,fill=black] (2,1) circle (.1ex);
			\draw[black,fill=black] (3,0) circle (.1ex);
			\draw[black,fill=black] (3,3) circle (.1ex);
			\draw[black,fill=black] (0,3) circle (.1ex);
			
		\end{tikzpicture}
		\caption{
			An example on 2D showing that $\mathcal S'$ is nonempty. In this example, $P_{\bm v_0^f}$ and $P_{\bm v_1^f}$ are not convex, and $S' = f' \in \Delta_1(\mathcal T)$ is a 1-dimensional sub-simplex marked as the red segment in the diagram. Here, we have $f' \subseteq \partial P_{\bm v_0^f} \cap \partial P_{\bm v_1^f}$.}
		\label{fig:2dnonconvexmesh}
	\end{figure}
	
	\begin{remark}
		For any $T \in \mathcal{T}$ and $T \subseteq P_f$, we note that $b(\bm{x}) := \left.\prod_{i=0}^\ell \phi_{\bm{v}_i^f}(\bm{x}) \right|_{T} = \lambda_0(\bm{x})\lambda_1(\bm{x})\cdots\lambda_\ell(\bm{x})$ (by using Definition~\ref{def:alpha}) is the local bubble function as studied in the geometric decomposition of Lagrange finite elements in~\cite{arnold2009geometric,chen2023geometric}. The main property of this local bubble function is $b(\bm{x}) = 0$ for any $\bm{x} \in \partial f$. By gluing these local barycentric coordinate functions $\lambda_{i}(\bm{x})$ into linear basis functions $\phi_{\bm{v}_i^f}(\bm{x})$, the above lemma presents an important property similar to the local bubble function.
	\end{remark}

	Now, we have the following main theorem about the new global formula for Lagrange basis functions on an arbitrary simplicial mesh.
	\begin{theorem}\label{thm:basisall-noncovex}
		For any simplicial mesh $\mathcal{T}$ of $\Omega$ for the $P_k$ element and interpolation point $\bm p \in \mathcal X_k^\ell({\mathcal T})$ for any $\ell=0,1,\ldots,d$, by taking Definition~\ref{def:alpha}, the basis function $\varphi_{\bm{p}}$ can be written as
		\begin{equation}\label{eq:varphip-noncovex}
			\varphi_{\bm{p}}(\bm{x}) = \frac{1}{\bm \alpha !} 
			\prod_{i=0}^\ell\prod_{j=0}^{\alpha_i -1}\left(k \phi_{\bm v_i^f}(\bm{x}) -j \right), \quad \forall \bm x \in \Omega,
		\end{equation}
		where $f = {\rm Conv}(\bm v_0^f, \bm v_1^f, \ldots, \bm v_\ell^f)$ with $\bm p \in \mathring f$ and $\phi_{\bm v_i^f}$ is the piecewise linear basis function at $\bm v_i^f \in \mathcal X_k^0({\mathcal T})$ regarding the simplicial mesh $\mathcal T$.
	\end{theorem}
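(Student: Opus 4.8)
The plan is to show that the right-hand side of \eqref{eq:varphip-noncovex} agrees pointwise with the classical global basis function $\varphi_{\bm p}$ defined in \eqref{eq:basisn}, treating the two regimes $\bm x \in P_{\bm p}$ and $\bm x \notin P_{\bm p}$ separately. Throughout I would invoke \eqref{eq:IpIf}, which gives $P_{\bm p} = P_f$ and $I_{\bm p} = I_f$, and Definition~\ref{def:alpha}, under which the index reads $\bm\alpha = (\alpha_0,\ldots,\alpha_\ell,0,\ldots,0)$ uniformly across all $T_s$ with $s \in I_{\bm p}$, with $\alpha_i \ge 1$ for $0 \le i \le \ell$ since $\bm p \in \mathring f$.

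First I would treat $\bm x \in P_f$. Picking $s \in I_{\bm p}$ with $\bm x \in T_s$, Definition~\ref{def:alpha} makes $\bm v_i^f$ the $i$-th vertex of $T_s$ for $i = 0,1,\ldots,\ell$, so the definition of the linear basis function yields $\phi_{\bm v_i^f}(\bm x) = \lambda_{s,i}(\bm x)$ on the closed simplex $T_s$. Substituting this into the product in \eqref{eq:varphip-noncovex}, and noting that $\alpha_i = 0$ for $i > \ell$ turns the remaining factors into empty products equal to $1$, the right-hand side collapses exactly to \eqref{eq:basisn}. Since every point of $P_f$ lies in some closed $T_s$ with $s \in I_{\bm p}$, this establishes the identity on all of $P_f$.

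Next I would handle $\bm x \notin P_f$, where $\varphi_{\bm p}(\bm x) = 0$ and I must verify that the product in \eqref{eq:varphip-noncovex} vanishes. Because $\alpha_i \ge 1$ for each $i = 0,\ldots,\ell$, the $j = 0$ factor is present for every such $i$, so the product contains $\prod_{i=0}^\ell k\phi_{\bm v_i^f}(\bm x) = k^{\ell+1}\prod_{i=0}^\ell \phi_{\bm v_i^f}(\bm x)$ as a factor. By Lemma~\ref{lem:linearbasis} this quantity is zero whenever $\bm x \notin P_f$, so the entire right-hand side vanishes, matching the classical value.

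The only substantive ingredient is the vanishing outside the patch in the second case, which is supplied directly by Lemma~\ref{lem:linearbasis}; I expect this to be the sole obstacle, and it is already resolved upstream. Everything else reduces to the identification $\phi_{\bm v_i^f} = \lambda_{s,i}$ on each $T_s$ furnished by Definition~\ref{def:alpha}, together with routine bookkeeping of the vanishing components of $\bm\alpha$.
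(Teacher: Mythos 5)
Your proposal is correct and follows essentially the same route as the paper's proof: factor out the $j=0$ terms to obtain $\prod_{i=0}^\ell k\phi_{\bm v_i^f}(\bm x)$ (using $\alpha_i\ge 1$), invoke Lemma~\ref{lem:linearbasis} for the vanishing outside $P_f$, and use the identification $\phi_{\bm v_i^f}=\lambda_{s,i}$ on each $T_s$ (via Definition~\ref{def:alpha}) to recover \eqref{eq:basisn} inside. The only difference is presentational — you treat the two regimes in the opposite order and spell out the empty-product bookkeeping, which the paper leaves implicit.
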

	
	\paragraph{Proof of Theorem~\ref{thm:basisall-noncovex}}
	\begin{proof}
		First, we can rewrite \eqref{eq:varphip-noncovex} to
		\begin{equation}
			\varphi_{\bm{p}}(\bm{x}) = \frac{1}{\bm \alpha !}  \prod_{i=0}^\ell k\phi_{\bm v_i^f}(\bm{x}) \times 
			\prod_{i=0}^\ell\prod_{j=1}^{\alpha_i -1}\left(k \phi_{\bm v_i^f}(\bm{x}) -j \right), \quad \forall \bm x \in \Omega
		\end{equation}
		since $\alpha_i \ge 1$ for all $i=0,1,\ldots,\ell$. Given Lemma~\ref{lem:linearbasis}, we have 
		$\varphi_{\bm{p}}(\bm{x}) = 0$ for any $\bm x \not\in \bigcup_{f \subseteq T \in \mathcal T} T = P_f = P_{\bm p}$ since $\bm p \in \mathring f$.
		Furthermore, for any $s \in T_s$ and $\bm x \in T_s$, we have $\phi_{\bm v_i^f}(\bm x) = \lambda_{s,i}(\bm x)$ by the definition of $\phi_{\bm v_i^f}(\bm x)$. This completes the proof.
	\end{proof}

	\begin{remark}
		Compared to formula \eqref{eq:varphip-convex} for vertex-convex meshes, the formulation \eqref{eq:varphip-noncovex} for arbitrary simplicial meshes appears more general and unified from the finite element perspective. However, regarding the representation by DNNs, equation \eqref{eq:varphip-convex} offers a more straightforward construction. Furthermore, it provides a more efficient implementation for Lagrange finite element functions with vertex-convex meshes, which will be discussed in the following section.
	\end{remark}

	\section{Representations of Lagrange elements using DNNs with ReLU and ReLU$^2$ activation functions}\label{sec:feDNN}
	In this section, we demonstrate the use of $\Sigma^{1,2}_{n_{1:L}}$ to represent Lagrange elements on both vertex-convex and arbitrary simplicial meshes by employing the global representation formulas presented in Theorem~\ref{thm:basisall-convex} and Theorem~\ref{thm:basisall-noncovex}, leveraging two fundamental properties of $\Sigma^{1,2}_{n_{1:L}}$.
	
	Initially, we introduce the following crucial properties of ${\rm ReLU}$ and ${\rm ReLU}^2$.
	\begin{properties}\label{pro:minmultiDNN}
		For any $(x,y) \in \mathbb R^2$, we have
		\begin{equation*}
			\min \{x,y\} = a_1{\rm ReLU}\left(W\begin{pmatrix}
				x \\ y
			\end{pmatrix}\right) \in \Sigma^1_{4} \subset \Sigma^{1,2}_4,
		\end{equation*}
		and 
		\begin{equation*}
			xy = a_2{\rm ReLU}^2\left(W\begin{pmatrix}
				x \\ y
			\end{pmatrix}\right) \in \Sigma^2_{4} \subset \Sigma^{1,2}_4,
		\end{equation*} 
		where 
		\begin{equation*} 
			W = \begin{pmatrix}
				1 & 1 \\
				-1 & -1 \\
				1 & -1 \\
				-1 & 1
			\end{pmatrix}
		\end{equation*}
		and
		\begin{equation*}
			a_1 = \frac{1}{2} \begin{pmatrix}
				1 & -1 & -1 & -1
			\end{pmatrix},\quad 
			a_2 = \frac{1}{4} \begin{pmatrix}
				1 & 1 & -1 & -1
			\end{pmatrix}.
		\end{equation*}
	\end{properties}
	\begin{proof}
		The first identity holds since
		\begin{equation*}
			\min \{x,y\} = \frac{x+y}{2} - \frac{\left|x-y\right|}{2} 
		\end{equation*}
		and
		\begin{equation*}
			x = {\rm ReLU}(x) -  {\rm ReLU}(-x)  \quad\text{and}\quad  |x| = {\rm ReLU}(x) +  {\rm ReLU}(-x).
		\end{equation*}
		The second identity holds since
		\begin{equation*}
			xy = \frac{(x+y)^2}{4} - \frac{(x-y)^2}{4}
		\end{equation*}
		and
		\begin{equation*}
			x^2 = {\rm ReLU}^2(x) +  {\rm ReLU}^2(-x).
		\end{equation*}
	\end{proof}
	
	To simplify our presentation, for any $\bm{p} \in \mathcal X_k^\ell(\mathcal T)$ with $0 \le \ell \le d$, let us define
	\begin{equation*}
		\tau(\bm{p}) = \# I_{\bm{p}},
	\end{equation*}
	as the number of simplices in the mesh $\mathcal{T}$ that contain the interpolation point $\bm{p}$. 
	
	\paragraph{Representation of basis functions on vertex-convex meshes.}
	By combining our main result in Theorem~\ref{thm:basisall-convex} and Properties~\ref{pro:minmultiDNN}, we have the following lemma for representing basis functions $\varphi_{\bm{p}}(\bm{x})$ for Lagrange elements on vertex-convex meshes.
	\begin{lemma}\label{lem:basisDNN-convex}
		For any vertex-convex simplicial mesh $\mathcal{T}$ of $\Omega$ for the $P_k$ element and $\bm{p} \in \mathcal X_k^\ell(\mathcal T)$ for some $0 \le \ell \le d$, there exists
		\begin{equation}
			\widetilde \varphi_{\bm{p}}(\bm{x}) \in \Sigma^{1,2}_{n_{1:L}},
		\end{equation}
		where 
		\begin{equation}
			L \le \left\lceil \log_2(\tau(\bm{p})) \right\rceil + 
			\max\left\{\left\lceil \log_2(\ell+1) \right\rceil, \left\lceil \log_2(k-\ell-1) \right\rceil\right\} + 3
		\end{equation}
		and
		\begin{equation}
			n_{j} \le 2^{3-j}\max\{\ell+1,k-\ell-1\}\tau(\bm{p}).
		\end{equation}
		for all $j=1,\ldots,L$ such that $\widetilde \varphi_{\bm{p}}(\bm{x}) = \varphi_{\bm{p}}(\bm{x})$ for all $\bm x \in \Omega$. Furthermore, $\widetilde \varphi_{\bm p}(\bm x) =  \varphi_{\bm p}(\bm x)$ for any $\bm x \in \mathbb R^d$ if $\bm p \in \mathring \Omega$. 
	\end{lemma}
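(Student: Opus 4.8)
The plan is to realize the closed-form expression \eqref{eq:varphip-convex} from Theorem~\ref{thm:basisall-convex} verbatim as a network in $\Sigma^{1,2}_{n_{1:L}}$, exploiting that every operation appearing in it is a primitive that $\Sigma^{1,2}$ implements in one layer: the barycentric coordinates $\lambda_{i',i}(\bm x)$ are affine and can be absorbed into the weights $W^1,b^1$; the binary minimum $\min\{x,y\}$ and the binary product $xy$ are each realizable by four ${\rm ReLU}/{\rm ReLU}^2$ neurons via Properties~\ref{pro:minmultiDNN}; and the signed identity is realizable by $x={\rm ReLU}(x)-{\rm ReLU}(-x)$. I would then assemble the two factors of \eqref{eq:varphip-convex} as two sub-networks running in parallel and multiply them at the end.

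For the first factor $\min_{i'\in I_{\bm p}}\{\prod_{i=0}^\ell k\,{\rm ReLU}(\lambda_{i',i}(\bm x))\}$, I would use one layer to produce all $\tau(\bm p)(\ell+1)$ values $k\,{\rm ReLU}(\lambda_{i',i}(\bm x))$, then contract the index $i$ for each fixed $i'$ by a balanced product-tree of depth $\lceil\log_2(\ell+1)\rceil$, and finally contract $i'$ by a balanced min-tree of depth $\lceil\log_2\tau(\bm p)\rceil$. For the second factor $\prod_{i=0}^\ell\prod_{j=1}^{\alpha_i-1}(k\min_{i'}\{\lambda_{i',i}(\bm x)\}-j)$, I would first form each $\min_{i'}\{\lambda_{i',i}(\bm x)\}$ by a min-tree of depth $\lceil\log_2\tau(\bm p)\rceil$, use an affine map to produce the $\sum_{i=0}^\ell(\alpha_i-1)=k-\ell-1$ shifted factors $k\min_{i'}\{\lambda_{i',i}\}-j$, and contract them by a balanced product-tree of depth $\lceil\log_2(k-\ell-1)\rceil$. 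Padding the shallower branch with signed-identity layers so both branches terminate at a common depth, one final multiplication layer combines them and the output affine map supplies the scalar $1/\bm\alpha!$.

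Counting then gives the stated bounds: up to additive constants the two branches have depths $\lceil\log_2\tau(\bm p)\rceil+\lceil\log_2(\ell+1)\rceil$ and $\lceil\log_2\tau(\bm p)\rceil+\lceil\log_2(k-\ell-1)\rceil$, so after alignment and the final product the depth is at most $\lceil\log_2\tau(\bm p)\rceil+\max\{\lceil\log_2(\ell+1)\rceil,\lceil\log_2(k-\ell-1)\rceil\}+3$; the number of active nodes contracts geometrically down each tree, which together with the factor-$4$ expansion of each min/product gadget yields $n_j\le 2^{3-j}\max\{\ell+1,k-\ell-1\}\tau(\bm p)$. Correctness on $\Omega$ is immediate from Theorem~\ref{thm:basisall-convex}, since the network computes \eqref{eq:varphip-convex} pointwise. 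For the final assertion, when $\bm p\in\mathring\Omega$ Lemma~\ref{lem:suppphik} gives $P_{\bm p}\subset\Omega$, and Lemma~\ref{lem:minprodlambda} shows the first factor, hence the whole network, vanishes on $\mathbb R^d\setminus P_{\bm p}\supseteq\mathbb R^d\setminus\Omega$ (its proof uses only that the ${\rm ReLU}$-products are nonnegative and that some $\lambda_{i',i}<0$ off $P_{\bm p}$, which holds on all of $\mathbb R^d$), so $\widetilde\varphi_{\bm p}=\varphi_{\bm p}=0$ there as well.

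The hard part will be the width/depth bookkeeping: aligning the two parallel branches to a common depth without inflating the width beyond $2^{3-j}\max\{\ell+1,k-\ell-1\}\tau(\bm p)$, correctly accounting for the factor-$4$ blow-up of each gadget against the geometric halving of active nodes as one descends each tree, and handling the degenerate cases $k-\ell-1=0$ (empty product, so $\widetilde\varphi_{\bm p}$ is the first factor alone), $\ell=0$, and $\tau(\bm p)=1$ under the convention $\lceil\log_2 m\rceil=0$ for $m\le 1$.
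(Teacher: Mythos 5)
Your construction is essentially the paper's own proof: both realize formula \eqref{eq:varphip-convex} from Theorem~\ref{thm:basisall-convex} by building the two factors as parallel sub-networks from the min/product gadgets of Properties~\ref{pro:minmultiDNN}, contracting via balanced trees of depths $\lceil\log_2(\ell+1)\rceil$, $\lceil\log_2(k-\ell-1)\rceil$, and $\lceil\log_2\tau(\bm p)\rceil$, and multiplying the two branches in one final layer, arriving at the same depth and width bounds. You are in fact somewhat more careful than the paper about the $\bm p\in\mathring\Omega$ assertion and the degenerate cases, which the paper's proof leaves implicit.
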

	\begin{proof}
		As demonstrated in \eqref{eq:varphip-convex} from Theorem~\ref{thm:basisall-convex}, we have
		\begin{equation}
			\varphi_{\bm{p}}(\bm{x}) = \frac{1}{\bm \alpha !} \min_{i' \in I_{\bm{p}}}\left\{\prod_{i=0}^\ell k{\rm ReLU}(\lambda_{i',i}(\bm{x})) \right\} \times 
			\prod_{i=0}^\ell \prod_{j=1}^{\alpha_i-1}\left(k\min_{i' \in I_{\bm{p}}}\left\{\lambda_{i',i} (\bm{x}) \right\} -j \right).
		\end{equation}
		This formulation allows us to construct a DNN where $\lambda_{i',i}(\bm{x})$ is the input of the first layer with $\tau(\bm{p}) \times (\ell+1)$ neurons. Subsequent operations such as $\prod_{i=0}^\ell k{\rm ReLU}\left(\lambda_{i',i}(\bm{x})\right)$, $\min_{i' \in I_{\bm{p}}}\left\{\lambda_{i',i} (\bm{x}) \right\}$, $\min_{i' \in I_{\bm{p}}}\left\{\cdot\right\}$, and $\prod_{i=0}^\ell\prod_{j=1}^{\alpha_i-1} \left(\cdot\right)$ can be implemented utilizing Property~\ref{pro:minmultiDNN}. Specifically, we have
		\begin{equation}
			\min_{i' \in I_{\bm{p}}}\left\{\prod_{i=0}^\ell k {\rm ReLU}\left(\lambda_{i',i}(\bm{x})\right) \right\} \in \Sigma^{1,2}_{n_{1:L_1}},
		\end{equation}
		where $L_1 \le \left\lceil \log_2(\tau(\bm{p})) \right\rceil + \left\lceil \log_2(\ell+1) \right\rceil + 1$ and $n_{j} \le 2^{3-j}(k-\ell-1)\tau(\bm{p})$, and
		\begin{equation}
			\prod_{i=0}^\ell\prod_{j=1}^{\alpha_i-1} \min_{i' \in I_{\bm{p}}}\left\{k\lambda_{i',i} (\bm{x}) -j\right\} \in \Sigma^{1,2}_{n_{1:L_2}},
		\end{equation}
		where $L_2 \le \left\lceil \log_2(\tau(\bm{p})) \right\rceil + \left\lceil \log_2(k-\ell-1) \right\rceil + 1$ and $n_{j} \le 2^{3-j}(\ell+1)\tau(\bm{p})$.
		By incorporating an additional hidden layer to combine (multiply) these two components, we establish that
		\begin{equation}
			\widetilde \varphi_{\bm{p}}(\bm{x}) \in \Sigma^{1,2}_{n_{1:L}},
		\end{equation}
		where $L \le \left\lceil \log_2(\tau(\bm{p})) \right\rceil  + 
		\max\left\{\left\lceil \log_2(\ell+1) \right\rceil, \left\lceil \log_2(k-\ell-1) \right\rceil\right\} + 3$ and
		$n_{j} \le 2^{3-j}\max\{\ell+1,k-\ell-1\}\tau(\bm{p})$.
	\end{proof}

	\paragraph{Representation of basis functions on arbitrary simplicial meshes.}
	For general simplicial mesh $\mathcal T$ that may not be vertex-convex, we first introduce how to use $\Sigma^{1,2}_{n_{1:L}}$ to represent the basis functions for linear finite elements on $\mathcal T$.
	
	\begin{theorem}[\cite{longo2023rham}, Theorem 4.3]\label{thm:linearbasis-nonconvex}
		For any simplicial mesh $\mathcal{T}$ of $\Omega$, $\bm v \in \mathcal X_k^0(\mathcal T) = \Delta_0(\mathcal T)$, and $\phi_{\bm v}$ the basis function of linear finite element at $\bm v$, there exist 
		\begin{equation}
			\widetilde \phi_{\bm v}(\bm x) \in \Sigma_{n_{1:L}}^1 \subset \Sigma_{n_{1:L}}^{1,2}
		\end{equation}
		where $L \le \left\lceil \log_2(\tau(\bm{v})) \right\rceil  + \left\lceil \log_2(d+1) \right\rceil + 7$ and $n_{j} \le 2^{3-j} (d+1)\tau(\bm{v})$ for all $j=1,\ldots,L$ such that $\widetilde \phi_{\bm v}(\bm x) = \phi_{\bm v}(\bm x)$ for all $\bm x \in \Omega$. Furthermore, $\widetilde \phi_{\bm v}(\bm x) =  \phi_{\bm v}(\bm x)$ for any $\bm x \in \mathbb R^d$ if $\bm v \in \mathring \Omega$. 
	\end{theorem}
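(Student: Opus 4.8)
The plan is to build $\widetilde{\phi}_{\bm v}$ from an explicit max--min representation of $\phi_{\bm v}$ in terms of the barycentric coordinate functions of the simplices in the patch $P_{\bm v}$, and then to realize each inner $\min$ and outer $\max$ by the ReLU gadgets of Property~\ref{pro:minmultiDNN}. Recall that $\phi_{\bm v}$ is continuous, equals $1$ at $\bm v$ and $0$ at every other vertex, restricts to the barycentric coordinate $\lambda_{i',\bm v}$ on each $T_{i'}$ with $i'\in I_{\bm v}$, and vanishes off $P_{\bm v}$. When $\mathcal T$ is vertex-convex this is immediate: Lemma~\ref{lem:minlambda} with $\ell=0$ gives $\min_{i'\in I_{\bm v}}\lambda_{i',\bm v}(\bm x)=\lambda_{s,\bm v}(\bm x)$ for $\bm x\in T_s$, while the same minimum is negative off the convex patch, so $\phi_{\bm v}={\rm ReLU}\big(\min_{i'\in I_{\bm v}}\lambda_{i',\bm v}\big)$. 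The obstacle for an arbitrary mesh is exactly that $P_{\bm v}$ need not be convex, so this global minimum may stay nonnegative outside the patch and may fail to pick out $\lambda_{s,\bm v}$ inside it.

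To handle a general patch I would localize to each simplex. Write $T_{i'}={\rm Conv}(\bm v,\bm w_1^{i'},\dots,\bm w_d^{i'})$ and let $\mu_{i',1},\dots,\mu_{i',d}$ denote the barycentric coordinates at $\bm w_1^{i'},\dots,\bm w_d^{i'}$, all extended affinely to $\mathbb R^d$. For a constant $N$ to be fixed, define
\begin{equation*}
h_{i'}(\bm x)=\min\Big\{\lambda_{i',\bm v}(\bm x),\ \min_{1\le m\le d}\big(\lambda_{i',\bm v}(\bm x)+N\mu_{i',m}(\bm x)\big)\Big\},
\end{equation*}
a minimum of $d+1$ affine functions, and set $\widetilde{\phi}_{\bm v}={\rm ReLU}\big(\max_{i'\in I_{\bm v}}h_{i'}\big)$. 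The point of the penalty $N\mu_{i',m}$ is that on $T_{i'}$ every $\mu_{i',m}\ge 0$, so $h_{i'}=\lambda_{i',\bm v}$ there, whereas crossing any facet $\{\mu_{i',m}=0\}$ drives the corresponding term, hence $h_{i'}$, sharply negative once $N$ is large.

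The crux is to verify, for a single sufficiently large $N$ determined by the (finite) mesh geometry, that ${\rm ReLU}(\max_{i'}h_{i'})$ reproduces $\phi_{\bm v}$ pointwise. For $\bm x\in T_s$ one has $h_s(\bm x)=\lambda_{s,\bm v}(\bm x)\ge 0$; any neighbor $T_{i'}$ sharing the facet through $\bm v$ agrees with $T_s$ on that facet, so the two local pieces match and the reconstruction is continuous, while for every other $i'$ the point $\bm x$ lies on the far side of some facet $\{\mu_{i',m}=0\}$ and the penalty forces $h_{i'}(\bm x)\le\lambda_{s,\bm v}(\bm x)$ when $N$ is large; hence the maximum equals $\lambda_{s,\bm v}=\phi_{\bm v}$ on $T_s$. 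For $\bm x\notin P_{\bm v}$ each $T_{i'}$ has either some $\mu_{i',m}(\bm x)<0$, making $h_{i'}$ very negative, or $\lambda_{i',\bm v}(\bm x)<0$, which is itself one of the terms in the minimum, so $\max_{i'}h_{i'}(\bm x)\le 0$ and the outer ${\rm ReLU}$ returns $0$. Establishing that one $N$ works uniformly on the non-convex patch, and checking the matching on all interior and boundary facets, is the main technical obstacle; everything else is bookkeeping.

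Finally I would read off the network size from Property~\ref{pro:minmultiDNN}. The first layer produces the $(d+1)\tau(\bm v)$ affine functions $\lambda_{i',\bm v},\mu_{i',m}$; each $h_{i'}$ is a $\min$ of $d+1$ affine functions, realizable by a balanced binary tree of pairwise minima of depth $\lceil\log_2(d+1)\rceil$, and $\max_{i'\in I_{\bm v}}$ over the $\tau(\bm v)$ simplices adds depth $\lceil\log_2(\tau(\bm v))\rceil$; a final ${\rm ReLU}$ together with the fixed overhead of the min/max gadgets accounts for the additive constant, giving $L\le\lceil\log_2(\tau(\bm v))\rceil+\lceil\log_2(d+1)\rceil+7$. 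Since the min/max gadgets expand the neuron count by the bounded factor in Property~\ref{pro:minmultiDNN} and each tree then contracts geometrically, one obtains $n_j\le 2^{3-j}(d+1)\tau(\bm v)$. The final assertion is automatic: the estimate $\max_{i'}h_{i'}\le 0$ holds on all of $\mathbb R^d\setminus P_{\bm v}$, so $\widetilde{\phi}_{\bm v}$ vanishes off the patch globally; when $\bm v\in\mathring\Omega$ the patch is an interior neighborhood of $\bm v$ and $\phi_{\bm v}$, extended by zero, is supported there, so the two functions agree on all of $\mathbb R^d$.
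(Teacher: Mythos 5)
Your construction is genuinely different from the one this theorem actually rests on. The paper does not prove the statement itself: it cites \cite{longo2023rham}, whose argument (sketched in the remark that follows the theorem) writes the possibly non-convex star as an overlapping union $P_{\bm v}=\bigcup_{i\in I_{\bm v}}P_{\bm v,i}$ of convex simplices each containing $\bm v$ in its interior, applies the convex-patch $\min$-formula of \cite{he2020relu} to obtain a local hat function $\phi_{\bm v,i}$ on each, and sets $\phi_{\bm v}=\max_{i}\phi_{\bm v,i}$; there is no free parameter to tune. You instead attach to each $T_{i'}$ a penalized minimum $h_{i'}=\min\{\lambda_{i',\bm v},\min_m(\lambda_{i',\bm v}+N\mu_{i',m})\}$ and take ${\rm ReLU}(\max_{i'}h_{i'})$. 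The idea is plausible and the size bookkeeping is consistent with the stated bounds, but the argument is not the paper's.

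More importantly, the proof as written has a genuine gap, which you acknowledge but do not close: everything hinges on the existence of a single finite $N$ for which (a) $h_{i'}\le\lambda_{s,\bm v}$ on every $T_s$ with $s\in I_{\bm v}$ and (b) $h_{i'}\le 0$ on $\Omega\setminus P_{\bm v}$. This is not a routine verification. In both cases you need an inequality of the form $N\max_m\bigl(-\mu_{i',m}(\bm x)\bigr)\ge D(\bm x)$, with $D=\lambda_{i',\bm v}-\lambda_{s,\bm v}$ in case (a) and $D=\lambda_{i',\bm v}$ in case (b), and both sides vanish simultaneously as $\bm x$ approaches $T_{i'}$ (on $T_s\cap T_{i'}$ in case (a), on the face of $T_{i'}$ opposite $\bm v$ in case (b)). The required $N$ is therefore the supremum of a $0/0$ ratio of piecewise linear functions, and its finiteness needs a homogeneity-plus-compactness argument whose essential input is combinatorial: every direction along which the penalty margin $\max_m(-\mu_{i',m})$ degenerates to first order points back into $T_{i'}$, hence into the set where $D$ also degenerates; and every point of $T_{i'}$ with $\lambda_{i',\bm v}>0$ has a neighborhood in $\Omega$ covered by star simplices, so that case (b) only ever tests points limiting onto $\{\lambda_{i',\bm v}=0\}$. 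Your dichotomy "the point lies on the far side of some facet $\{\mu_{i',m}=0\}$" also omits the case where $\bm x$ is instead beyond the face $\{\lambda_{i',\bm v}=0\}$ (harmless, since then $h_{i'}<0$, but it must be stated). Until the uniform-$N$ claim is actually proved, the proposal is a plan rather than a proof; the overlapping-convex-subpatch route avoids the issue entirely because each local piece equals $\phi_{\bm v}$ on its own simplex and is provably $\le\phi_{\bm v}$ everywhere, with no constant to estimate.
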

	
	\begin{remark}
		Theorem~\ref{thm:linearbasis-nonconvex} is based on a main theorem in \cite{he2020relu} about the representation of the linear basis function $\phi_{\bm{v}}(\bm{x})$ by ReLU DNNs when $P_{\bm{v}}$ is convex for $\bm{v} \in \mathcal{X}_k^0(\mathcal{T})$. The key step is the overlapping decomposition $P_{\bm{v}} = \bigcup_{i \in I_{\bm{v}}} P_{\bm{v},i}$, where $\bm{v} \in \mathring{P}_{\bm{v},i}$ and $P_{\bm{v},i}$ is a simplex consisting of $d+1$ simplices. That is, we can understand $P_{\bm{v},i}$ as local vertex-convex meshes of $\bm{v}$ for all $i \in I_{\bm{v}}$.
		Then, by using the result in \cite{he2020relu}, we can construct ReLU DNNs $\phi_{\bm{v},i}(\bm{x})$ for all $i \in I_{\bm{v}}$ such that $\phi_{\bm{v},i}(\bm{x})$ is the linear basis function in terms of the local vertex-convex meshes $P_{\bm{v},i}$. Finally, one can have $\phi_{\bm{v}}(\bm{x}) = \max_{i \in I_{\bm{v}}}\left\{ \phi_{\bm{v},i}(\bm{x})\right\}$. More details can be found in~\cite{longo2023rham}.
	\end{remark}

	By combining our main result in Theorem~\ref{thm:basisall-noncovex} with Theorem~\ref{thm:linearbasis-nonconvex} and Properties~\ref{pro:minmultiDNN}, we have the following lemma for representing basis functions $\varphi_{\bm{p}}(\bm{x})$ for Lagrange elements on arbitrary simplicial meshes.
	\begin{lemma}\label{lem:basisDNN-nonconvex}
		For any simplicial mesh $\mathcal{T}$ of $\Omega$ for the $P_k$ element and $\bm{p} \in \mathcal X_k^\ell(\mathcal T)$ for some $0 \le \ell \le d$, let us assume $f = {\rm Conv}(\bm v_0^f, \bm v_1^f, \ldots, \bm v_\ell^f)$ and $\bm p \in \mathring f$ and $\phi_{\bm v_i^f}$. Then, there exists
		\begin{equation}
			\widetilde \varphi_{\bm{p}}(\bm{x}) \in \Sigma^{1,2}_{n_{1:L}},
		\end{equation}
		where 
		\begin{equation}
			L \le \max_{i=0,1,\ldots,\ell}\left\lceil \log_2(\tau(\bm{v}_i^f)) \right\rceil + \left\lceil \log_2(d+1) \right\rceil + \left\lceil \log_2(k) + 7\right\rceil
		\end{equation}
		and
		\begin{equation}
			n_{j} \le 2^{3-j} (d+1) \sum_{i=0,1,\ldots,\ell} \tau(\bm{v}_i^f),
		\end{equation}
		for all $j=1,\ldots,L$ such that $\widetilde \varphi_{\bm{p}}(\bm{x}) = \varphi_{\bm{p}}(\bm{x})$ for all $\bm x \in \Omega$. Furthermore, $\widetilde \varphi_{\bm p}(\bm x) =  \varphi_{\bm p}(\bm x)$ for any $\bm x \in \mathbb R^d$ if $\bm p \in \mathring \Omega$. 
	\end{lemma}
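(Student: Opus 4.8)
The plan is to read the network off the representation formula \eqref{eq:varphip-noncovex} in Theorem~\ref{thm:basisall-noncovex}, which writes
\[
\varphi_{\bm p}(\bm x) = \frac{1}{\bm\alpha!}\prod_{i=0}^\ell\prod_{j=0}^{\alpha_i-1}\left(k\phi_{\bm v_i^f}(\bm x) - j\right)
\]
as a product of exactly $\sum_{i=0}^\ell \alpha_i = k$ affine factors in the $\ell+1$ global linear basis functions $\phi_{\bm v_0^f},\ldots,\phi_{\bm v_\ell^f}$. Thus $\widetilde\varphi_{\bm p}$ splits into two stages stacked in depth: a first stage that produces the linear basis functions $\phi_{\bm v_i^f}$, and a second stage that forms the $k$ affine factors and multiplies them together. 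This mirrors the two-stage structure used for the vertex-convex case in Lemma~\ref{lem:basisDNN-convex}, the essential difference being that here the $\phi_{\bm v_i^f}$ are genuinely piecewise linear (not available as affine inputs) and must themselves be produced by subnetworks.

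First I would invoke Theorem~\ref{thm:linearbasis-nonconvex} to realize each $\phi_{\bm v_i^f}$, for $i=0,1,\ldots,\ell$, as a network $\widetilde\phi_{\bm v_i^f}\in\Sigma^1_{n_{1:L_i}}\subset\Sigma^{1,2}_{n_{1:L_i}}$ with $L_i\le\lceil\log_2\tau(\bm v_i^f)\rceil + \lceil\log_2(d+1)\rceil + 7$ and layer widths at most $2^{3-j}(d+1)\tau(\bm v_i^f)$. Placing these $\ell+1$ subnetworks in parallel on the shared input $\bm x$ and padding the shallower branches up to the common depth $L_0 := \max_{0\le i\le\ell}L_i$, the width of the combined first stage at layer $j$ is bounded by the sum over branches, namely $\sum_{i=0}^\ell 2^{3-j}(d+1)\tau(\bm v_i^f) = 2^{3-j}(d+1)\sum_{i=0}^\ell\tau(\bm v_i^f)$, matching the claimed envelope. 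The output of this stage is the vector $(\phi_{\bm v_0^f}(\bm x),\ldots,\phi_{\bm v_\ell^f}(\bm x))$.

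Next, the $k$ affine factors $k\phi_{\bm v_i^f}(\bm x)-j$ are free, each being an affine function of an already-computed $\phi_{\bm v_i^f}$, so they are absorbed into the weights feeding the multiplication stage. I would then multiply these $k$ numbers in a balanced binary tree, applying the exact bilinear identity $xy = a_2{\rm ReLU}^2(W(x,y)^\top)$ from Properties~\ref{pro:minmultiDNN} at each internal node; this costs $\lceil\log_2 k\rceil$ additional layers. Concatenating the two stages gives total depth $L\le L_0 + \lceil\log_2 k\rceil = \max_i\lceil\log_2\tau(\bm v_i^f)\rceil + \lceil\log_2(d+1)\rceil + \lceil\log_2 k + 7\rceil$, so that $\widetilde\varphi_{\bm p}\in\Sigma^{1,2}_{n_{1:L}}$ with $\widetilde\varphi_{\bm p}(\bm x) = \varphi_{\bm p}(\bm x)$ on $\Omega$. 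The final ``furthermore'' claim follows because, when $\bm p\in\mathring\Omega$, the analogous statement in Theorem~\ref{thm:linearbasis-nonconvex} makes each subnetwork agree with $\phi_{\bm v_i^f}$ on all of $\mathbb R^d$ over the support of $\varphi_{\bm p}$, while Lemma~\ref{lem:linearbasis} forces the product to vanish off $P_f$ there.

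I expect the depth accounting and the support argument to be routine; the hard part will be the width bookkeeping for the multiplication stage. A balanced product tree over $k$ leaves has roughly $k/2$ internal nodes at its widest level, hence about $2k$ neurons, and these sit at layers $j>L_0$, where the geometric envelope $2^{3-j}(d+1)\sum_i\tau(\bm v_i^f)$ is already very small; reconciling this peak width with the stated bound (or identifying the regime, e.g.\ via the grouping $\prod_{j=0}^{\alpha_i-1}(k\phi_{\bm v_i^f}-j)$ per vertex, in which the envelope is not violated) is the crux. A secondary subtlety is that a vertex $\bm v_i^f$ of $f$ may lie on $\partial\Omega$ even when $\bm p\in\mathring f\cap\mathring\Omega$, so the off-$\Omega$ identity must be closed through the vanishing of the product rather than through agreement of every single branch.
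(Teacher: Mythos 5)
Your proposal follows essentially the same route as the paper's own proof: invoke Theorem~\ref{thm:linearbasis-nonconvex} to realize each $\phi_{\bm v_i^f}$ in parallel, absorb the affine factors $k\phi_{\bm v_i^f}-j$ into the weights, and perform the $k$ multiplications with $\lceil\log_2 k\rceil$ additional ${\rm ReLU}^2$ layers, arriving at the stated depth and width bounds. The width-bookkeeping concern you flag for the multiplication stage is not resolved in the paper either—its proof is exactly as terse on that point—so your account matches the published argument.
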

	\begin{proof}
		As demonstrated in \eqref{eq:varphip-noncovex} from Theorem~\ref{thm:basisall-noncovex}, we have
		\begin{equation*}
			\varphi_{\bm{p}}(\bm{x}) = \frac{1}{\bm{\alpha} !} 
			\prod_{i=0}^\ell\prod_{j=0}^{\alpha_i -1}\left(k \phi_{\bm{v}_i^f}(\bm{x}) -j \right) \quad \forall \bm{x} \in \Omega.
		\end{equation*}
		Given that $\phi_{\bm{v}_i^f}$ can be represented by $\Sigma_{n_{1:L}}^1 \subset \Sigma_{n_{1:L}}^{1,2}$ and $\Sigma^{1,2}_{n_{1:L}}$ can handle the multiplication operation, the above formulation allows us to construct a DNN function in $\Sigma^{1,2}_{n_{1:L}}$ to recover $\varphi_{\bm{p}}(\bm{x})$ by first representing $\phi_{\bm{v}_i^f}(\bm{x})$. 
		According to Theorem~\ref{thm:linearbasis-nonconvex}, $\phi_{\bm{v}_i^f}(\bm{x})$ can be reproduced with $\left\lceil \log_2(\tau(\bm{v})) \right\rceil  + \left\lceil \log_2(d+1) \right\rceil + 7$ hidden layers and $2^{3-j} (d+1)\tau(\bm{v})$ neurons in the $j$-th hidden layer. 
		Noting that there are extra $\sum_{j=0}^\ell \alpha_i = k$ multiplications from $\phi_{\bm{v}_i^f}(\bm{x})$ to $\varphi_{\bm{p}}(\bm{x})$, we require only an additional $\left\lceil \log_2(k) \right\rceil$ hidden layers to recover these multiplication operations and to parallelly concatenate hidden layers of different $\phi_{\bm{v}_i^f}(\bm{x})$. Thus, we can establish bounds for $L$ and $n_j$.
	\end{proof}
	
	\paragraph{Representation of Lagrange finite element functions using $\Sigma_{n_{1:L}}^{1,2}$.}
	With the representation results for basis functions in place, we can state our main theorem as follows.
	\begin{theorem}\label{thm:uhdnn}
		For any $P_k$ Lagrange finite element function $u_h(\bm{x})$ on a 
		simplicial mesh $\mathcal{T}$ of $\Omega$, there is
		\begin{equation}
			\widetilde u_h(\bm{x}) \in \Sigma_{n_{1:L}}^{1,2},
		\end{equation}
		such that $\widetilde u_h(\bm{x}) = u_h(\bm x)$ for any $\bm x \in \Omega$.
		More precisely, we have the following bounds of $\Sigma_{n_{1:L}}^{1,2}$:
		\begin{description}    
			\item[If $\mathcal T$ is vertex-convex,] 
			\begin{equation}
				L \le \sum_{\ell=0}^{d}\eta_\ell \left( \left\lceil \log_2(\tau_\ell) \right\rceil + \max\left\{\left\lceil \log_2(\ell+1) \right\rceil, \left\lceil \log_2(k-\ell-1) \right\rceil\right\} + 3\right)
			\end{equation}
			and 
			\begin{equation}
				\max_{j} n_j \le 4 \max_\ell \left\{ \max\{\ell+1,k-\ell-1\}\tau_\ell \right\}  + 2d + 1;
			\end{equation}
			\item[If $\mathcal T$ is arbitrary,]
			\begin{equation}
				L \le \left(\left\lceil \log_2(\tau_0) \right\rceil + \left\lceil \log_2(d+1) \right\rceil + \left\lceil \log_2(k) \right\rceil + 7\right) \sum_{\ell=0}^d \eta_\ell
			\end{equation}
			and
			\begin{equation}
				\max_j n_j \le 4 (d+1)\min\{d, k+1\} \tau_0 + 2d + 1,
			\end{equation}
		\end{description}
		where $\tau_\ell := \max_{\bm{p} \in \mathcal X_k^\ell(\mathcal{T})} \tau(\bm{p})$ and $\eta_\ell = \# \mathcal X_k^\ell(\mathcal{T})$ for all $\ell=0,1,\ldots,d$.
	\end{theorem}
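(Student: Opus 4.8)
The plan is to expand $u_h$ in the nodal Lagrange basis and then assemble the individual basis-function networks produced by Lemma~\ref{lem:basisDNN-convex} and Lemma~\ref{lem:basisDNN-nonconvex} into a single network in $\Sigma^{1,2}_{n_{1:L}}$. Writing
\begin{equation*}
u_h(\bm x) = \sum_{\ell=0}^d \sum_{\bm p \in \mathcal X_k^\ell(\mathcal T)} u_h(\bm p)\,\varphi_{\bm p}(\bm x),
\end{equation*}
I would organize the sum according to the geometric decomposition $\mathcal X_k(\mathcal T) = \bigoplus_{\ell=0}^d \mathcal X_k^\ell(\mathcal T)$, so that each point $\bm p$ at level $\ell$ contributes a term whose network cost is controlled by the level-$\ell$ estimate from the relevant basis lemma. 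The total number of terms is $\sum_{\ell=0}^d \eta_\ell = \sum_\ell \#\mathcal X_k^\ell(\mathcal T)$, exactly the number of degrees of freedom, and this is the quantity that should drive the depth.

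The heart of the construction is a sequential accumulator. Rather than realizing all $\varphi_{\bm p}$ in parallel (which would keep the depth small but inflate the width to scale with the number of basis functions), I would process the interpolation points one after another in consecutive blocks of layers. Throughout every block I keep two auxiliary channels alive: a copy of the input $\bm x$, passed forward using the identity $t = {\rm ReLU}(t) - {\rm ReLU}(-t)$ (costing $2d$ neurons per layer and clearly realizable with ${\rm ReLU}$ activations in $\Sigma^{1,2}_{n_{1:L}}$), and a single running-sum channel holding the partial sum $\sum_{\bm p' \text{ processed}} u_h(\bm p')\varphi_{\bm p'}(\bm x)$, which I shift by a fixed constant so that it stays nonnegative and can be carried through a single ${\rm ReLU}$ neuron (the shift is removed by the final affine layer, accounting for the $+1$ in the width bound). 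For the current point $\bm p \in \mathcal X_k^\ell(\mathcal T)$ I splice in the sub-network from Lemma~\ref{lem:basisDNN-convex} (vertex-convex case) or Lemma~\ref{lem:basisDNN-nonconvex} (arbitrary case), whose inputs are the carried $\bm x$ and whose scalar output $\varphi_{\bm p}(\bm x)$ is scaled by $u_h(\bm p)$ and added into the running-sum channel using the multiplication gadget of Properties~\ref{pro:minmultiDNN}. The terminal affine map $W^{L+1}u^L + b^{L+1}$ reads off the accumulator and undoes the shift.

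With this layout the depth is additive over the blocks: summing the per-basis-function depth bounds of Lemma~\ref{lem:basisDNN-convex} (respectively Lemma~\ref{lem:basisDNN-nonconvex}) over all $\bm p$, grouped by level and using $\tau(\bm p)\le\tau_\ell$, yields $\sum_{\ell} \eta_\ell(\lceil\log_2\tau_\ell\rceil + \max\{\lceil\log_2(\ell+1)\rceil,\lceil\log_2(k-\ell-1)\rceil\}+3)$ in the vertex-convex case and $(\lceil\log_2\tau_0\rceil+\lceil\log_2(d+1)\rceil+\lceil\log_2 k\rceil+7)\sum_\ell \eta_\ell$ in the arbitrary case, matching the two depth estimates. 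The width is the maximum over blocks of one basis sub-network's width plus the $2d+1$ carry neurons; bounding the number of vertices of $f$ by $\ell+1$ gives the stated $\max_j n_j$ bounds, with $\min\{d,k+1\}$ absorbing the admissible range of $\ell$. I expect the main obstacle to be the careful synchronization and padding: the sub-networks for different $\bm p$ and different levels have different depths, so I must pad the shorter blocks with identity layers so that the two auxiliary channels and the accumulator stay consistent across the whole composition, and I must verify that inserting each sub-network neither corrupts the carried $\bm x$ nor the running sum, so that the concatenation genuinely remains in $\Sigma^{1,2}_{n_{1:L}}$ at the claimed uniform width. The collapse of the per-level vertex and patch counts into the closed-form constants is then routine arithmetic.
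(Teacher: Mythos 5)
Your proposal is correct and follows essentially the same route as the paper: your sequential accumulator with $2d$ identity-carrying neurons for $\bm{x}$ and one shifted running-sum channel is precisely what the paper formalizes as the modified architecture $\widetilde{\Sigma}^{1,2}_{n_{1:L}}$ in \eqref{eq:defdnn_new} together with Properties~\ref{prop:tildeSigma}, which make the depth additive over the per-basis-function blocks and the width the blockwise maximum plus $2d+1$. The only cosmetic difference is that scaling $\varphi_{\bm{p}}(\bm{x})$ by the fixed nodal value $u_h(\bm{p})$ requires only a linear weight in the affine map, not the multiplication gadget of Properties~\ref{pro:minmultiDNN}.
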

	
	To simplify the proof, we introduce the following revised neural network architecture:
	\begin{equation}\label{eq:defdnn_new}
		\begin{cases}
			&\widetilde{u}^0(\bm{x}) =  \bm{x},\\
			&\left[\widetilde{u}^j (\bm{x})\right]_i = \sigma^j_i \left(\left[\widetilde W^\ell \left(\widetilde{u}^{j-1}(\bm{x}), \bm{x}\right) + \widetilde b^j\right]_i \right), \quad i=1, \cdots, n_j,~j=1, \cdots, L,\\
			&\widetilde{u}(\bm{x}) = \widetilde W^{L+1} \left(\widetilde{u}^0, \widetilde{u}^1, \cdots, \widetilde{u}^L\right) + \widetilde b^{L+1},
		\end{cases}
	\end{equation}
	and denote it as $\widetilde{\Sigma}_{n_{1:L}}^{1,2}$ when $\sigma_i^\ell$ shares the same definition as in \eqref{eq:sigma}. This type of network architecture is widely studied in the literature, such as in \cite{daubechies2022nonlinear,he2022relu}, where $\sigma_i^j = {\rm ReLU}$. Here, we generalize this structure to the learnable ${\rm ReLU}$-${\rm ReLU}^2$ activation function while preserving the following important properties.
	\begin{properties}\label{prop:tildeSigma}
		For $\widetilde{\Sigma}_{n_{1:L}}^{1,2}$ on $\mathbb{R}^d$, we have:
		\begin{enumerate}
			\item 
			\begin{equation}
				\widetilde{\Sigma}_{n_{1:L_1}}^{1,2} + \widetilde{\Sigma}_{m_{1:L_2}}^{1,2} \subseteq 
				\widetilde{\Sigma}_{\left(n_{1:L_1}, m_{1:L_2}\right)}^{1,2},
			\end{equation}
			where $\left(n_{1:L_1}, m_{1:L_2}\right)$ equals $\left(n_{1}, \cdots, n_{L_1}, m_1, \cdots, m_{L_2}\right)$;
			\item 
			\begin{equation}
				\Sigma_{\widetilde{n}_{1:L}}^{1,2} \subseteq  \widetilde{\Sigma}_{\widetilde{n}_{1:L}}^{1,2} \subseteq \Sigma_{n_{1:L}}^{1,2},
			\end{equation}
			provided $n_j \ge \widetilde{n}_j + 2d + 1$ for all $j$.
		\end{enumerate}
	\end{properties}
	\begin{proof}
		The first property is a direct consequence of the definition of $\widetilde{\Sigma}_{n_{1:L}}^{1,2}$. The proof follows the same lines as in \cite{he2022relu}. The second property is valid because $\sigma_i^j$ can replicate the identity function, which can be demonstrated using a similar argument to that in \cite{he2022relu}.
	\end{proof}

	\paragraph{Proof of Themorem~\ref{thm:uhdnn}}
	\begin{proof}
		For any $u_h(\bm x)$, it can be written as
		\begin{equation}
			u_h(\bm x) = \sum_{\bm p \in \mathcal X_k(\mathcal T)} u_{h}(\bm p) \varphi_{\bm p}(\bm x) = \sum_{\ell=0}^{d}\sum_{\bm p \in \mathcal X_k^\ell(\mathcal T)} u_{h}(\bm p) \varphi_{\bm p}(\bm x).
		\end{equation}
		Given Properties~\ref{prop:tildeSigma} and the estimates in Lemma~\ref{lem:basisDNN-convex} and Lemma~\ref{lem:basisDNN-nonconvex}, we have
		\begin{equation}
			\widetilde u_h(\bm x) =  \sum_{\ell=0}^{d}\sum_{\bm p \in \mathcal X_k^\ell(\mathcal T)} u_{h}(\bm p) \widetilde \varphi_{\bm p}(\bm x) \in \widetilde \Sigma_{\tilde n_{1:L}}^{1,2} \subset \Sigma_{n_{1:L}}^{1,2}
		\end{equation}
		where $L$ and $n_j$ are bounded as above.
	\end{proof}
	
	\begin{remark}
		For simplicial meshes, we observe that $\tau_\ell \geq \tau_{\ell'}$ for any $0 \leq \ell \leq \ell' \leq d$. In particular, $\tau_0$ is typically the largest and often scales as the exponent (or even factorial) of $d$. 
		Thus, we note that the depths and widths required for vertex-convex meshes are significantly smaller than those for arbitrary simplicial meshes, as demonstrated in Theorem~\ref{thm:uhdnn}.
	\end{remark}

	\section{An application of the representation result for approximation properties of $\Sigma_{n_{1:L}}^{1,2}$}\label{sec:application}
	In this section, we present an approximation result for $\Sigma_{n_{1:L}}^{1,2}$ utilizing representation theory on a special vertex-convex mesh. We assume that $u \in W^{q+1,p}(\Omega)$ for some $k \geq 1$ and $p \in [2,\infty]$ with $\Omega = [0,1]^d$. The space $W^{k+1,p}(\Omega)$ denotes the standard Sobolev space~\cite{adams2003sobolev}. We establish the following approximation property of $\Sigma_{n_{1:L}}^{1,2}$.
	\begin{theorem}\label{thm:approx}
		For any function $u \in W^{k+1,p}(\Omega)$ with $p \in [2,\infty]$ and $N>0$, we obtain the approximation rate
		\begin{equation}\label{eq:approx}
			\inf_{\widetilde{u} \in \Sigma_{n_{1:L}}^{1,2}} \|u - \widetilde{u}\|_{W^{s,p}(\Omega)} = \|u\|_{W^{k+1,p}(\Omega)} \mathcal{O}\left( N^{-\frac{k+1-s}{d}}\right),
		\end{equation}
		for any $s = 0, 1$. Here, $L \leq C_1 \log(k) k^d N$ and $n_{j} \leq C_2 k$, where $C_1$ and $C_2$ depend only on $d$.
	\end{theorem}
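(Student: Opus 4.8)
The plan is to combine the representation result for Lagrange finite element functions (Theorem~\ref{thm:uhdnn}) with the classical finite element interpolation error estimate on a suitably chosen vertex-convex mesh over $\Omega = [0,1]^d$. The strategy separates cleanly into an approximation-theoretic step, which is standard, and a network-size bookkeeping step, which is where the representation machinery from Section~\ref{sec:feDNN} does the work. First I would fix a uniform simplicial mesh of $[0,1]^d$ with mesh size $h \sim N^{-1/d}$ obtained by partitioning $[0,1]^d$ into $N$ hypercubes (so $h = N^{-1/d}$ up to constants) and triangulating each cube via the Kuhn--Freudenthal subdivision into $d!$ simplices. The key structural observation to verify is that this standard mesh is \emph{vertex-convex}, so that the more efficient bounds of Theorem~\ref{thm:uhdnn} for vertex-convex meshes apply; this follows because the patch $P_{\bm v}$ of any interior vertex is a translate/reflection pattern whose union is a convex region (indeed a cube or a union forming a convex polytope) under the Kuhn triangulation.

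Next I would invoke the standard Bramble--Hilbert / finite element interpolation estimate: for the $P_k$ Lagrange interpolant $u_h := I_h u$ on this quasi-uniform mesh, one has
\begin{equation}
    \|u - u_h\|_{W^{s,p}(\Omega)} \le C\, h^{k+1-s} \|u\|_{W^{k+1,p}(\Omega)}, \qquad s = 0,1,
\end{equation}
valid for $p \in [2,\infty]$ since $W^{k+1,p}(\Omega)$ embeds into $C(\overline\Omega)$ in the relevant range (for $p=2$ one needs $k+1 > d/2$, which I would either assume or handle by noting the embedding condition; more robustly one uses the Scott--Zhang quasi-interpolant to avoid pointwise-value requirements). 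Substituting $h \sim N^{-1/d}$ gives the rate $N^{-(k+1-s)/d}$ in \eqref{eq:approx}. Then I would set $\widetilde u := \widetilde u_h \in \Sigma^{1,2}_{n_{1:L}}$ using Theorem~\ref{thm:uhdnn} applied to $u_h$, which represents $u_h$ \emph{exactly}, so $\|u - \widetilde u\|_{W^{s,p}} = \|u - u_h\|_{W^{s,p}}$ and the error is controlled entirely by the interpolation estimate.

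The remaining task is to translate the abstract depth/width bounds of Theorem~\ref{thm:uhdnn} (vertex-convex case) into the claimed $L \le C_1 \log(k)\, k^d N$ and $n_j \le C_2 k$. Here I would use that on the uniform Kuhn mesh the local quantities are all bounded by constants depending only on $d$: the patch multiplicity $\tau_\ell = \max_{\bm p \in \mathcal X_k^\ell(\mathcal T)} \tau(\bm p)$ is bounded by the number of simplices meeting a vertex, which is $2^d d!$ or similar, hence $\tau_\ell \le C(d)$ independent of $k$ and $N$. Consequently each per-basis-function depth contribution is $O(\log_2(\ell+1) + \log_2(k)) = O(\log k)$ and each width contribution is $O(\max\{\ell+1, k-\ell-1\}) = O(k)$, giving $n_j \le C_2 k$. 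For the depth, summing $\eta_\ell = \#\mathcal X_k^\ell(\mathcal T)$ over $\ell$ and noting $\sum_\ell \eta_\ell = \dim$ of the space $\sim k^d N$ degrees of freedom (the total number of interpolation nodes scales like $N \cdot \binom{d+k}{d} \sim k^d N$ up to $d$-dependent constants), each carrying a $O(\log k)$ depth factor, yields $L \le C_1 \log(k)\, k^d N$.

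The main obstacle I anticipate is the depth bound: Theorem~\ref{thm:uhdnn} \emph{sums} the per-basis-function depths over all $\ell$ and all nodes (because the $\widetilde\Sigma$ construction concatenates the subnetworks in series via Property~\ref{prop:tildeSigma}), so the total depth genuinely scales with the number of degrees of freedom $\sim k^d N$ rather than staying bounded. I must confirm that the factorized counting gives exactly $\log(k)\cdot k^d N$ and not a worse power, which requires checking that $\tau_\ell$ and $\ell$ contribute only constant ($d$-dependent) and logarithmic factors respectively, uniformly over the mesh. A secondary subtlety is ensuring the interpolation estimate holds at $p=2$ without extra smoothness assumptions; I would address this by using the Scott--Zhang interpolant (which is $W^{1,p}$-stable and does not require pointwise values) in place of the nodal Lagrange interpolant, since Theorem~\ref{thm:uhdnn} represents \emph{any} finite element function in the space regardless of how its coefficients are chosen, so the exact-representation step is unaffected by this replacement.
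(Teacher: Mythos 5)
Your proposal follows essentially the same route as the paper: the Kuhn--Freudenthal triangulation of $[0,1]^d$, verification that it is vertex-convex so the sharper bounds of Theorem~\ref{thm:uhdnn} apply, the observation that $\tau_\ell$ depends only on $d$ while the depth sums a $\log(k)$ factor over the $\sim k^d N$ degrees of freedom, and the classical $W^{s,p}$ interpolation estimate to close the argument. Your extra care about well-definedness of the nodal interpolant at $p=2$ (via Scott--Zhang) is a legitimate refinement that the paper's proof passes over silently, but it does not change the approach.
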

	
	\paragraph{Uniform simplex mesh on $[0,1]^d$ with size $h = \frac{1}{m}$}
	To prove Theorem~\ref{thm:approx}, we introduce some special simplices with size $h=\frac{1}{m}$ for some $m \in \mathbb{N}^+$ in $[0,1]^d$ as
	\begin{equation}
		\Delta_{\bm{v}, \rho}^h := \left\{ \bm{x} \in [0,1]^d : 0 \leq x_{\rho(1)} - v_{\rho(1)} \leq \cdots \leq x_{\rho(d)} - v_{\rho(d)} \leq h \right\}
	\end{equation}
	for any $\bm{v} \in \bm{Z}_h^d := \left\{0, h, 2h, \cdots, (m-1)h, 1\right\}^d$ and any permutation $\rho$ of $\{1,2,\cdots,d\}$. The corresponding uniform simplicial mesh $\mathcal{T}_h^d$ with size $h$ on $[0,1]^d$ is
	\begin{equation}\label{eq:mesh}
		\mathcal{T}_h^d:= \left\{ \Delta_{\bm{v}, \rho}^h : \bm{v} \in \bm{Z}_h^d, \rho \text{ is a permutation of } \{1,2,\cdots,d\}\right\} = [0,1]^d.
	\end{equation}
	This particular mesh, $\mathcal{T}_h^d$, is known as the Freudenthal triangulation~\cite{freudenthal1942simplizialzerlegungen} and is also referred to as the Kuhn partition, as described in the paper~\cite{kuhn1960some}. This triangulation can also result from decomposing $[0,1]^d$ with hyperplanes defined by $x_i - x_j = kh$ and $x_i = sh$ for any $1 \leq i,j \leq d$ and $k,s = 0, \cdots, m$. Hence, the set of all vertices of $\mathcal{T}_h^d$ is $\bm{Z}_h^d$. As demonstrated in~\cite{yarotsky2018optimal}, $\text{supp}(\varphi_{\bm{v}})$ is convex for any vertex $\bm{v} \in \bm{Z}_h^d = \mathcal X_k^0(\mathcal T_h^d)$. That is, $\mathcal T_h^d$ is vertex-convex.

	\paragraph{Proof of Theorem~\ref{thm:approx}}
	\begin{proof}
		Let $V_h^k$ denote the space of $k$-th order Lagrange elements on the uniform mesh $\mathcal{T}_h^d$ of $[0,1]^d$ with size $h = \frac{1}{m}$. Given the estimate for the vertex-convex mesh in Theorem~\ref{thm:uhdnn} and the self-similarity of $\mathcal{T}_h^d$, we observe that $\tau_\ell$ for all $\ell=0,1,\ldots,d$ are determined only by the dimension $d$ and is independent of the mesh size $h$. Hence, we have 
		\begin{equation}
			\max_{j} n_{j} \leq C_W k,
		\end{equation}
		where $C_W$ is a constant depending only on $d$.
		Additionally, we find that
		\begin{equation}
			\begin{aligned}
				&\left\lceil \log_2(\tau_\ell) \right\rceil + 
				\max_{1\leq \ell\leq \min\{k,d+1\}}\left\{\left\lceil \log_2(\ell) \right\rceil, \left\lceil \log_2(k-\ell-1) \right\rceil\right\} + 3\\
				\leq &\left\lceil \log_2(\tau_\ell) \right\rceil + \max\left\{ \left\lceil \log_2(d+1) \right\rceil, \left\lceil\log_2(k)\right\rceil \right\} \leq C_L \log(k) + 3,
			\end{aligned}
		\end{equation}
		where $C_L$ is a constant dependent only on $d$. Consequently, we obtain
		\begin{equation}
			L \leq  C_L \log(k) \sum_{\ell=0}^{d} \eta_\ell,
		\end{equation}
		noting that $\sum_{\ell=0}^{d} \eta_\ell$ is the total number of degrees of freedom for $V_h^k$, which can be bounded by
		\begin{equation}
			\sum_{\ell=0}^{d} \eta_\ell = \text{dim}(V_h^k) \leq C_T k^d m^d,
		\end{equation}
		where $C_T$ depends only on $d$. As a result, we have
		\begin{equation}
			V_h^k \subseteq \Sigma_{n_{1:L}}^{1,2},
		\end{equation}
		with $L \leq C_1 \log(k) k^d m^d$ and $n_{j} \leq C_2 k$, where $C_1$ and $C_2$ are constants depending only on $d$.
		
		Utilizing the classical approximation results of $V_h^k$ for functions in $W^{k+1,p}(\Omega)$, as shown in~\cite{ciarlet2002finite,wang2013finite}, we have 
		\begin{equation}
			\inf_{\widetilde{u} \in  \Sigma_{n_{1:L}}^{1,2}} \|u - \widetilde{u}\|_{W^{k,p}(\Omega)} \leq   \inf_{\widetilde{u} \in V_h^k} \|u - \widetilde{u}\|_{W^{s,p}(\Omega)} = \mathcal{O}(\|u\|_{W^{k+1,p}} h^{k+1-s}),
		\end{equation}
		for any $s=0,1$ and $p\in [2,\infty]$, where 
		$L \leq C_1 \log(k) k^d m^d$ and $n_{j} \leq C_2 k$, and $C_1$ and $C_2$ depend only on $d$. The proof is completed by noting that $h=\frac{1}{m}$ and substituting $m^d$ with $N$.
	\end{proof}
	
	\begin{remark}
		In the approximation estimate given by \eqref{eq:approx}, the case for $s=0$ and $k=1$, that is, the $L^p$ estimate for linear finite element interpolation, has already been established in \cite{he2020relu,yarotsky2018optimal}. By incorporating bit-extraction techniques, the $L^p$ approximation rate for ReLU DNNs (i.e., $k=1$) can be enhanced to $N^{-\frac{2}{d}}$ for functions in $C(\Omega)$, as shown in \cite{yarotsky2018optimal,shen2020deep}, and to $N^{-\frac{2k}{d}}$ for functions in $C^{k}(\Omega)$, as demonstrated in \cite{lu2021deep}. Notably, the $W^{s,p}$ estimate for $s=1$ is first introduced in this work.
	\end{remark}

	\section{Concluding remarks}\label{sec:conclusions}
	In conclusion, this paper validates the capability of $\Sigma_{n_{1:L}}^{1,2}$ to represent Lagrange finite element functions across dimensions and orders, leveraging two novel formulations of global basis functions on both vertex-convex and arbitrary simplicial meshes, geometric decomposition, and insights from high-dimensional simplicial meshes, barycentric functions, and multiplication of linear basis functions. This framework not only clarifies the basis function representations but also encompasses the full scope of Lagrange finite element functions with efficient parameterizations. Moreover, our investigation into the uniform simplex mesh and the associated approximation properties has yielded a straightforward proof for the approximation result for $\Sigma_{n_{1:L}}^{1,2}$.
	
	Additionally, this work opens the door to several interesting mathematical inquiries in the domains of deep neural networks and finite element methods. A pertinent question is whether $\Sigma_{n_{1:L}}^{1,2}$ with $\mathcal{O}(N)$ parameters can effectively represent Lagrange finite element functions with $N$ degrees of freedom. Efforts to improve the approximation rate could also involve combining the representation results with bit-extraction techniques, as shown for linear finite elements in~\cite{yarotsky2018optimal}. Furthermore, the expressiveness of $\Sigma_{n_{1:L}}^{1,2}$ could potentially be generalized to encompass a wider array of finite elements, such as those from finite element exterior calculus~\cite{arnold2006finite, arnold2010finite,longo2023rham}, elements with higher smoothness~\cite{hu2023construction, chen2021geometric}, $h$-$p$ adaptive elements~\cite{babuvska1994p}, and even non-conforming elements~\cite{wang2013finite}. On the other hand, in contrast to traditional methods that rely on domain partitioning and function expression via bases and degrees of freedom, our approach employs deep neural networks for finite element function representation within a cohesive framework. This innovation could revolutionize finite element implementation and inspire new ``adaptive" methods for solving PDEs, as shown in the study of linear finite elements in one dimension in \cite{he2020relu}.

	\newpage
	\bibliographystyle{plain}
	\bibliography{dnn_fem.bib}
	
\end{document}